\theoremstyle{plain}
\newtheorem{thm}{Theorem}[section]
\newtheorem{prop}[thm]{Proposition}
\newtheorem{lemma}[thm]{Lemma}
\newtheorem{cor}[thm]{Corollary}
\newtheorem*{thm*}{Theorem}
\theoremstyle{definition}
\newtheorem{dfn}[thm]{Definition}
\newtheorem{ex}[thm]{Example}
\theoremstyle{remark}
\newtheorem{rem}[thm]{Remark}
\numberwithin{equation}{section}
\DeclareMathAlphabet\mathsf{OT1}{cmss}{m}{n}
\newcommand{\fk}{{\mathfrak k}}
\newcommand{\fp}{{\mathfrak p}}
\newcommand{\R}{\mathbb R}           
\newcommand{\Z}{\mathbb Z}           
\def\O{\mathcal{O}}
\newcommand{\su}{\mathfrak{su}}
\newcommand{\so}{\mathfrak{so}}
\newcommand{\g}{\mathfrak{g}}
\renewcommand{\k}{\mathfrak{k}}
\newcommand{\p}{\mathfrak{p}}
\renewcommand{\O}{\mathcal{O}}
\newcommand{\gk}{\operatorname{GKdim}}
\newcommand{\AV}{\mathrm{AV}}
\newcommand{\gD}{\Delta}
\begin{document}

\title[Unitarity of highest weight Harish-Chandra modules]{A characterization of unitarity of some highest weight Harish-Chandra modules}

\author{Zhanqiang Bai}
\address{Department of Mathematical Sciences, Soochow University, Suzhou, Jiangsu, China}
\email{\tt  zqbai@suda.edu.cn}

\author{Markus Hunziker}
\address{
Department of Mathematics, Baylor University, Waco, Texas, USA} 
\email{Markus\_Hunziker@baylor.edu}

\begin{abstract}
Let $L(\lambda)$ be a highest weight Harish-Chandra module with highest weight $\lambda$. When the associated variety of $L(\lambda)$ is not maximal, that is, not equal to the nilradical of the corresponding parabolic subalgebra, we prove that the unitarity of $L(\lambda)$ can be determined by a simple condition on the value of $z = (\lambda + \rho, \beta^{\vee})$, where $\rho$ is half the sum of positive roots and $\beta$ is the highest root. In the proof, certain distinguished antichains of positive noncompact roots play a key role.

By using these antichains, we are also able to provide a uniform formula for the Gelfand--Kirillov dimension of all highest weight Harish-Chandra modules, generalizing our previous result for the case of unitary highest weight Harish-Chandra modules.
\end{abstract}

\subjclass[2020]{22E47, 17B10}

\keywords{Unitary highest weight module, associated variety, Gelfand--Kirillov dimension}

\maketitle

\tableofcontents

	\section{Introduction}

Let $G_{\mathbb{R}}$ be a connected non-compact simple Lie group with finite center, and let $K_{\mathbb{R}}$ be a maximal compact subgroup. 
From the work of Harish-Chandra (see comments in \cite[\S 3.2]{BHXZ}), it follows that infinite-dimensional highest weight Harish-Chandra modules for $G_\mathbb{R}$ exist if and only if ($G_{\mathbb{R}}$, $K_{\mathbb{R}}$) is a Hermitian symmetric pair.
The problem of determining when a highest weight Harish-Chandra module is unitarizable has been extensively studied by various authors (see, for example, the references in \cite{EHW}). The full classification was independently completed in \cite{EHW} and \cite{Ja1}, though the classification itself is rather intricate.
In this paper, we provide a simple and uniform characterization of unitarity for Harish-Chandra modules with a given associated variety, expressed in terms of the highest weight (see Theorem~\ref{conj}).

From now, we assume that $(G_\R,K_\R)$ is a Hermitian symmetric pair. We denote by $K$ the complexification of the compact group $K_{\mathbb{R}}$
and  by $(\mathfrak{g},\mathfrak{k})$ the complexified Lie algebras of $(G_\R,K_\R)$.
 Then we have the usual decompositition $\mathfrak{g}=\mathfrak{p}^{-}\oplus \mathfrak{k}\oplus \mathfrak{p}^{+}$ of $\mathfrak{g}$ as a $K$-representation. Let $\mathfrak{h} \subseteq \mathfrak{k}$ be a Cartan subalgebra. Then $\mathfrak{h}$ is a Cartan subalgebra of $\mathfrak{g}$.
 Let $\Delta$ and $\Delta(\mathfrak{k})$ denote the root systems of $(\mathfrak{g},\mathfrak{h})$ and $(\mathfrak{k},\mathfrak{h})$, respectively. 
Let $\Delta^+$ be the positive system of $\Delta$, and define 
$\Delta^+(\mathfrak{k})=\Delta(\mathfrak{k})\cap \Delta^+$ and $\Delta(\mathfrak{p}^+)=\Delta^+\setminus \Delta^+(\mathfrak{k})$. 
  Let $\beta $ denote the unique maximal noncompact root of $\Delta^{+}$. Now choose $\zeta  \in \mathfrak{h}^{*} $ so that  $\zeta $ is orthogonal to $\Delta(\mathfrak{k})$  and ($\zeta , \beta ^{\vee } $)=1. Let $\lambda  \in \mathfrak{h}^{*}$ be $\Delta^+(\mathfrak{k})$-dominant integral and $F(\lambda)$ be the irreducible $\mathfrak{k}$-module with highest weight $\lambda $. By letting the nilradical act by zero, we may consider $F(\lambda)$ as a module of the parabolic subalgebra $\mathfrak{q}=\mathfrak{k}+\mathfrak{p}^+$. Then we define:
	$$N(\lambda )=U(\mathfrak{g})\otimes_{U(\mathfrak{q})}F(\lambda ).$$
	Let $L(\lambda) $ denote the irreducible quotient of $ N(\lambda )$, which is a highest weight module of $\mathfrak{g}$.
	
 From \cite{EHW}, $L(\lambda)$ is a highest weight Harish-Chandra module if and only if
$\lambda \in \Lambda^+(\mathfrak{k})$, where
	$$\Lambda^+(\mathfrak{k})=\{\lambda\in \mathfrak{h}^*\mid \lambda \text{~is $\Delta^+(\mathfrak{k})$-dominant~ integral}\}.$$
Write $\rho$ for half the sum of positive roots in $\Delta^+$. Then we can write $\lambda=\lambda_{0}+z \zeta $, with $\lambda_{0} \in \mathfrak{h}^{*} $ such that ($\lambda_{0}  + \rho, \beta  $)=0, and $z=(\lambda+\rho,\beta^\vee) \in \mathbb{R}$.

	The associated variety of a  highest weight Harish-Chandra module is known to be the closure of a single $K$-orbit in $\mathfrak{p}^+$
	(see \S \ref{GKAV} for more details).
	Furthermore, the closures of the $K$-orbits in $\mathfrak{p}^+$ form a linear chain of  varieties
	\begin{equation*}\label{chain}
	\{0\}={\overline{\mathcal{O}_0}}\subset \overline{\mathcal{O}_1}\subset \cdots \subset\overline{\mathcal{O}_{r-1}}\subset \overline{\mathcal{O}_{r}}=\mathfrak{p}^+,
	\end{equation*}
	where $r$ is the $\mathbb{R}$-rank of $G_{\mathbb{R}}$, i.e., the dimension of  a Cartan subgroup of the  group $G_{\mathbb{R}}$, which is also equal to the rank of the symmetric space $G_{\mathbb{R}}/K_{\mathbb{R}}$.
	Therefore, if $L(\lambda)$ is a highest weight Harish-Chandra module with highest weight $\lambda$,
	then there is an integer $0\leq k(\lambda)\leq r$ such that  the associated variety of $L(\lambda)$ is $\overline{\mathcal{O}_{k(\lambda)}}$.

	Denote \begin{equation}\label{z-k}
	   z_{k}=(\rho,\beta^{\vee})+u_{k}=(\rho,\beta^{\vee})-kc, 
	\end{equation}
 for $0\leq k\leq r$. Here $c$ is a real number associated with the Hermitian type Lie group $G_{\mathbb{R}}$, see Table~\ref{constants-k}.
 
	In this paper, we will prove the following result.

	\begin{thm}\label{conj}
		Let  $\lambda\in \Lambda^+(\k)$ and suppose $\AV(L(\lambda))=\overline{\O_k}$ with $0\leq k\leq r-1$. Then   
		$$
		\mbox{$L(\lambda)$ is unitarizable if and only if $(\lambda+\rho,\beta^\vee)= z_k$.}
		$$
	\end{thm}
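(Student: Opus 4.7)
The approach is to combine the Enright--Howe--Wallach (or Jantzen) classification of unitary highest weight Harish-Chandra modules with the description of the associated variety at each reduction point on the $z$-line.

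Parametrize $\lambda = \lambda_0 + z\zeta$ with $(\lambda_0 + \rho, \beta) = 0$ as in the setup. By the EHW classification, for fixed $\lambda_0$ the set of $z$ making $L(\lambda_0 + z\zeta)$ unitarizable decomposes into a continuous ray $z \leq A(\lambda_0)$ together with a finite sequence of isolated ``Wallach points'' spaced by the constant $c$ of Table~\ref{constants-k}. On the continuous ray, the generalized Verma module $N(\lambda) = L(\lambda)$ is irreducible, so $\AV(L(\lambda)) = \overline{\O_r} = \p^+$; the hypothesis $k \leq r-1$ therefore rules out the continuous regime, and unitarity of $L(\lambda)$ can only occur at an isolated reduction point. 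The essential claim then reduces to: at an isolated unitary reduction point the associated variety equals $\overline{\O_k}$ if and only if $z = z_k$.

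To establish this I would proceed via the Jantzen filtration analysis of $N(\lambda)$, together with the Gelfand--Kirillov dimension calculation that identifies the $K$-orbit at each reduction point. At a unitary isolated reduction point, the leading term of the signed Jantzen sum pins down $\AV(L(\lambda))$ as a specific $\overline{\O_k}$, and the arithmetic of consecutive reduction points spaced by $c$ translates into the relation $(\lambda, \beta^\vee) = -kc$, equivalently $(\lambda + \rho, \beta^\vee) = (\rho, \beta^\vee) - kc = z_k$, independently of $\lambda_0$. Both implications of the theorem follow: unitarity combined with $\AV(L(\lambda)) = \overline{\O_k}$ forces $z = z_k$, and conversely $z = z_k$ together with $\AV(L(\lambda)) = \overline{\O_k}$ places $\lambda$ at the unitary EHW Wallach point corresponding to the $k$-th stratum.

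The main obstacle will be the uniform matching between the integer $k$ (the associated-variety stratum) and the value $z$ at a unitary Wallach point, because for a given $\lambda_0$ only a subset of $\{z_0, \dots, z_{r-1}\}$ actually appears as unitary Wallach points---some of the other values $z_k$ yield reducible but non-unitary modules or do not arise as reduction points at all. Handling this correspondence uniformly across all Hermitian symmetric types, in particular the exceptional cases $E_6$ and $E_7$, will likely require the antichain techniques developed later in the paper, which compute $k(\lambda)$ directly from $\lambda$ and enable a type-independent match against the EHW unitary list.
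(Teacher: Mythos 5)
Your proposal correctly separates the two directions and correctly observes that the hypothesis $k\leq r-1$ excludes the continuous part of the unitary spectrum, so the question concerns isolated reduction points only. The ``only if'' direction (unitarity together with $\AV(L(\lambda))=\overline{\O_k}$, $k\leq r-1$, forces $z=z_k$) is indeed what the Gelfand--Kirillov-dimension / associated-variety computation for \emph{unitary} modules yields; this is Proposition~\ref{C: dimYk} from \cite{BH}, which the paper takes as known, so that part of your plan is sound in outline.

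The genuine gap is the converse. From $z=z_k$ and $\AV(L(\lambda))=\overline{\O_k}$ you assert that $\lambda$ is ``placed at the unitary EHW Wallach point corresponding to the $k$-th stratum,'' but nothing in your sketch rules out the scenario in which $z_k$ is a \emph{non-unitary} reduction point for the given $\lambda_0$ while $\AV(L(\lambda_0+z_k\zeta))$ nevertheless equals $\overline{\O_k}$. You flag this yourself (``only a subset of $\{z_0,\dots,z_{r-1}\}$ actually appears as unitary Wallach points''), but the Jantzen-filtration / GK-dimension machinery you invoke only recomputes the associated variety; it provides no additional constraint that forces unitarity. The paper instead converts the associated-variety hypothesis into a positivity constraint on $\lambda$ via the antichain Lemma~\ref{+1}: if $\AV(L(\lambda))=\overline{\O_k}$ with $k\leq r-1$, then $(\lambda+\rho,\alpha^\vee)>0$ for some $\alpha$ in the distinguished antichain $A_k$, a consequence of the description of $\AV(L(\lambda))$ by the width $m(\lambda)$ of the diagram $Y_\lambda$ from \cite{BHXZ}. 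Writing $\lambda_0=\tau-(\tau+\rho,\beta^\vee)\zeta$ with $\tau\in\Lambda^+(\k)$, Lemma~\ref{lamrho-} (and Lemma~\ref{lamrho-2} for the non-simply-laced types) shows that at $z=z_k$ this inequality becomes $1-(\tau,\beta^\vee-\alpha^\vee)>0$ for a specific $\alpha\in A_k$, forcing most fundamental-weight coefficients of $\tau$ to vanish. A type-by-type inspection of the few remaining shapes of $\lambda$ then identifies each with a weight on the known unitary list (\cite{EHW}, \cite{KV}, \cite{EW}, \cite{DES}). This positivity mechanism is exactly what your outline is missing, and it is where the antichains $A_k$ do their real work.
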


\begin{rem} 
Note that when $\AV(L(\lambda))=\overline{\O_r}$, it may happen that there is more than one point such that $L(\lambda)$ is unitarizable.
\end{rem}

	In \cite{BH}, we have found a uniform formula for the Gelfand--Kirillov dimensions of unitary highest weight Harish-Chandra modules. Now we want to generalize our formula to all highest weight Harish-Chandra modules.

\begin{dfn}
		For $1\leq k\leq r-1$,  define:
		$$A_k=\{\alpha \in \Delta(\mathfrak{p}^+)\mid \operatorname{ht}(\alpha)=k\left\lceil c \right\rceil+1\}.$$	
	\end{dfn}
Here $\left\lceil c \right\rceil$ denotes the smallest
integer $n$ such that $n \geq c$.
	
\begin{dfn} Let $\lambda_0\in \Lambda^+(\k)$.
	\begin{itemize}
		\item[(a)] If  $\mathfrak{g}$ is of type $ADE$,  for $0\leq k\leq r-1$, define
		$$
		z_k(\lambda_0) :=\min\{z\in \mathbb{Z} \mid \mbox{$\exists ~\alpha \in A_k$ such that $(\lambda_0+z\zeta+\rho,\alpha^\vee)>0$} \}.
		$$
		
		\item[(b)]
		If  $\mathfrak{g}$ is of type $BC$,  for $0\leq k\leq r-1$, define
		$$
		z_k(\lambda_0):=
		\begin{cases}
		\min \{ z\in \mathbb{Z} \mid \mbox{$\exists~ \alpha \in A_k$ such that $(\lambda_0+z\zeta+\rho,\alpha^\vee)>0$} \}, &\mbox{if $k$ is even}\\
		\min \{z\in {\frac{1}{2}}+\mathbb{Z} \mid \mbox{$\exists~ \alpha \in A_k$ such that $(\lambda_0+z\zeta+\rho,\alpha^\vee)>0$} \}, &\mbox{if $k$ is odd.}\\
		\end{cases}
		$$
	\end{itemize}
\end{dfn}

In the special case when $\lambda_0=-(\rho, \beta^{\vee})\zeta$, we also write $z_k$ instead of $z_k(\lambda_0)$
(This coincides with our definition of $z_k$ in (\ref{z-k})).

Our new formula  for the Gelfand--Kirillov dimensions of all  highest weight Harish-Chandra modules is as follows. 

\begin{thm*}[Theorem \ref{main}]
	Suppose $\lambda=\lambda_0+z\zeta$ is a reduction point. Then
	\begin{itemize}

		\item[(a)]  If  $\mathfrak{g}$ is of type $ADE$, then $z\in \mathbb{Z}$ and
		$$
		\gk L(\lambda)=
		\begin{cases}
		rz_{r-1}, & \mbox{if $z<z_{r-1}(\lambda_0)$}\\
		kz_{k-1}, & \mbox{if $z_{k}(\lambda_0)\leq z < z_{k-1}(\lambda_0)$, where $1\leq k\leq r-1$}  \\
		0, &  \mbox{if $z_{0}(\lambda_0) \leq z\in \mathbb{Z}$}.\\
		\end{cases}
		$$
		\item[(b)]
		If  $\mathfrak{g}$ is of type $BC$, then $z\in \mathbb{Z}$ or $z\in \frac{1}{2}+\mathbb{Z}$ and
		\begin{align*}
		\gk L(\lambda)=
		\begin{cases}
		rz_{r-1}, & \mbox{if $z<z_{r-1}(\lambda_0)$}\\
		kz_{k-1}, & \mbox{if $z_{k}(\lambda_0)\leq z < z_{k-2}(\lambda_0)$, where $2\leq k\leq r-1$ and}  \\
		& \mbox{\quad either $z\in \mathbb{Z}$ and  $k$ is even  or $z\in \frac{1}{2}+\mathbb{Z}$ and $k$ is odd}\\
		z_0, & \mbox{if $z_{1}(\lambda_0)  \leq z\in \frac{1}{2}+\mathbb{Z}$}\\
		0, &  \mbox{if $z_{0}(\lambda_0)  \leq z\in \mathbb{Z}$}.\\
		\end{cases}
		\end{align*}
		
	\end{itemize}

\end{thm*}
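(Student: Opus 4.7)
The plan is to combine two ingredients: the identification $\gk L(\la) = \dim \AV(L(\la))$ together with the fact recalled in \S\ref{GKAV} that the linear chain $\overline{\O_0} \subset \cdots \subset \overline{\O_r}$ exhausts the possible associated varieties, so $\gk L(\la) = \dim \overline{\O_{k(\la)}}$ for a unique $k(\la)$; and an explicit formula $\dim \overline{\O_k} = k\cdot z_{k-1}$ for $1\leq k\leq r$, which already appeared in \cite{BH} for the unitary case and only requires the linear chain structure together with the constants in Table~\ref{constants-k}. With these in hand, the theorem reduces to identifying $k(\la)$ as a function of $z$.

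To do this I would use the antichains $A_k\subset\Delta(\p^+)$ and the reducibility criteria from \cite{EHW}. The defining property of $z_k(\la_0)$ is that some $\al\in A_k$ satisfies $(\la+\rho,\al^\vee)>0$ precisely when $z\geq z_k(\la_0)$. The key claim, which extends the argument of \cite{BH}, is that at a reduction point $\la$ one has $k(\la)\leq k-1$ if and only if some $\al\in A_{k-1}$ satisfies $(\la+\rho,\al^\vee)>0$: in one direction such a root provides a singular vector of noncompact type in $N(\la)$ whose associated submodule cuts the associated variety of the quotient $L(\la)$ down past level $k-1$; in the other, the antichain $A_{k-1}$ is, by construction, the minimal set of roots needed to detect any drop below level $k$, so if none of its roots pairs positively then the variety cannot drop below level $k$. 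Consequently $k(\la)=k$ iff $z_k(\la_0)\leq z<z_{k-1}(\la_0)$, and substituting into $\dim\overline{\O_k}=k\cdot z_{k-1}$ yields part (a).

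The main obstacle will be part (b) for type $BC$, where the set of reduction points at a fixed $\la_0$ splits into an integer progression and a half-integer progression, interleaved with step $c/2$. The ``$k\to k-1$'' transition of the $ADE$ case is replaced by ``$k\to k-2$'' within each parity class, reflecting the fact that for $k$ odd the positivity condition $(\la+\rho,\al^\vee)>0$ with $\al\in A_k$ is achievable only for $z\in\tfrac{1}{2}+\Z$, while for $k$ even only for $z\in\Z$. The bookkeeping of this parity constraint, combined with the fact that the singular vector detected by $A_k$ now produces an associated variety drop to $\overline{\O_{k-2}}$ rather than $\overline{\O_{k-1}}$, yields the piecewise formula in (b). This parity analysis, which must be carried out explicitly for the classical pairs of types $\ssC_n$ and $\ssD_n$ (and the relevant exceptional pair), is where the technical heart of the argument lies.
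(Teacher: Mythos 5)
Your overall plan is the same as the paper's: interpret $z_k(\la_0)\leq z$ via the antichains $A_k$ to pin down $k(\la)$, then substitute $\gk L(\la)=\dim\overline{\O_{k(\la)}}=k(\la)\,z_{k(\la)-1}$. But the key intermediate step is not justified, and the mechanism you propose for it is not the one that works. You assert that if some $\al\in A_{k-1}$ has $(\la+\rho,\al^\vee)>0$ then this ``provides a singular vector of noncompact type in $N(\la)$'' which forces $k(\la)\leq k-1$, and conversely that if no root in $A_{k-1}$ pairs positively then the variety cannot drop. Neither direction is a proof: a positive pairing $(\la+\rho,\al^\vee)>0$ at a single noncompact root does not by itself yield a singular vector in the parabolic Verma module (singularity is governed by integrality and by the full Jantzen filtration, not by positivity at one $\al$), and even when a singular vector exists, passing from its existence to an exact statement about the level of $\AV(L(\la))$ in the chain $\overline{\O_0}\subset\cdots\subset\overline{\O_r}$ requires an argument you do not supply.

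What the paper actually does is route everything through the combinatorial invariant $m(\la)=\width(Y_\la)$, where $Y_\la=\{\al\in\Delta(\fp^+)\mid(\la+\rho,\al^\vee)\in\Z_{\leq 0}\}$ is a lower order ideal. Lemma~\ref{antichain} guarantees that a maximal antichain in $Y_\la$ can be taken at a single height, and since $A_k$ is the unique antichain of cardinality $k+1$ (resp.\ $\lfloor k/2\rfloor+1$ in type $BC$) at minimal height, one gets $z_k(\la_0)\leq z \iff A_k\not\subseteq Y_\la \iff m(\la)\leq k$ (with the obvious parity adjustments in type $BC$). The paper then cites its companion result from \cite{BHXZ}, which computes $k(\la)$ exactly in terms of $m(\la)$: $k(\la)=m(\la)$ in simply laced type, $k(\la)=2m(\la)$ or $2m(\la)+1$ in type $BC$ depending on integrality of $\la$. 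This is the black box you are missing; without it, or an equivalent substitute proved from scratch, the equivalence $z_k(\la_0)\leq z\iff k(\la)\leq k$ does not follow, and your proposal has a genuine gap at precisely this point. Your identification of the parity bookkeeping in part (b) is correct in spirit (even $k$ for $z\in\Z$, odd $k$ for $z\in\tfrac12+\Z$, so the index jumps by $2$ within a fixed parity class), but again the justification should run through the width $m(\la)$ and the $k(\la)=2m(\la)$ versus $2m(\la)+1$ dichotomy of \cite{BHXZ}, not through a singular-vector heuristic.
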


	\section{Distinguished antichains of positive noncompact roots}
	
	Note that for any integer $1\leq h\leq \operatorname{ht}(\beta)$, the set
	$$
	\Delta(\p^+)_h:=\{\alpha \in \Delta(\mathfrak{p}^+)\mid \operatorname{ht}(\alpha)=h\}
	$$
	is an antichain in $\Delta(\mathfrak{p}^+)$.
	
Let $\Pi$ denote the set of simple roots in $\Delta^+$. 

\begin{lemma}[{\cite[Lemma 4.1]{Ja1}}]
Let $\alpha \in \Delta(\mathfrak{p^{+}})$, let  $\pi_{1},\ldots,\pi_{k}$
be distinct elements of $\Pi\cap \Delta(\mathfrak{k})$, and assume that $\alpha+\pi_{i}\in \Delta(\mathfrak{p^{+}})$ for $i=1,\ldots, k$. Then $k\leq 2$. Furthermore, if $k=2$, then 
$\pi_{1}\perp\pi_{2}$ and  $\alpha+\pi_{1}+\pi_{2}\in \Delta(\mathfrak{p^{+}})$. \hfill$\Box$
\end{lemma}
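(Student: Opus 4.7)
The plan is to proceed by root-string analysis, exploiting the Hermitian structure of $\Delta(\p^+)$: there is a unique noncompact simple root $\gamma$ and it appears with coefficient $1$ in every member of $\Delta(\p^+)$, while each compact $\pi_i$ has coefficient $0$ at $\gamma$.

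First I would exploit that, for each $i$, since $\alpha + \pi_i \in \Delta^+$ and $\pi_i \in \Pi$, the $\mathfrak{sl}_2$-chain formula yields $(\alpha, \pi_i^\vee) = p_i - q_i$ with $q_i \geq 1$. Note also that whenever $\alpha \pm \pi_i$ is a root, its coefficient at $\gamma$ is still $1$, so $\alpha \pm \pi_i$ automatically lies in $\Delta(\p^+)$; this will be used repeatedly to pass between members of $\Delta(\p^+)$.

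To establish $\pi_1 \perp \pi_2$ when $k = 2$, I would expand
\[
(\alpha + \pi_1, \pi_2^\vee) = (\alpha, \pi_2^\vee) + (\pi_1, \pi_2^\vee).
\]
In the simply-laced types the string-length bound forces $(\alpha, \pi_i^\vee) = -1$ exactly. If $\pi_1, \pi_2$ were adjacent in the Dynkin diagram, then $(\pi_1, \pi_2^\vee) = -1$ and the total pairing would be $-2$; but in a simply-laced system this forces $\alpha + \pi_1 = -\pi_2$, contradicting positivity of $\alpha + \pi_1$. Hence $\pi_1 \perp \pi_2$, and the pairing then equals $(\alpha, \pi_2^\vee) = -1$, giving $\alpha + \pi_1 + \pi_2 \in \Delta(\p^+)$ via the standard root-string criterion.

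For the bound $k \leq 2$, I would proceed by contradiction: if three distinct simple compact $\pi_1, \pi_2, \pi_3$ all satisfy $\alpha + \pi_i \in \Delta(\p^+)$, then the previous step forces pairwise orthogonality. Iterating Step~2 shows $\alpha + \pi_i + \pi_j$ and then $\alpha + \pi_1 + \pi_2 + \pi_3$ all lie in $\Delta(\p^+)$. Since $\gamma$ is cominuscule in the Hermitian setting, removing $\gamma$ from the Dynkin diagram of $\g$ yields the diagram of $\Pi \cap \Delta(\k)$; a case-by-case inspection in types $A_n, B_n, C_n, D_n, E_6, E_7$ then rules out three pairwise orthogonal simple compact roots pairing in the required way with a single $\alpha \in \Delta(\p^+)$.

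The main obstacle I anticipate lies in the non-simply-laced types $B_n$ and $C_n$, where the $\pi_i$-string through $\alpha$ can have length $3$: then both $\alpha + \pi_i$ and $\alpha - \pi_i$ may be roots, $(\alpha, \pi_i^\vee)$ can vanish, and the "$(\alpha, \pi_i^\vee) = -1$" input into Step~2 fails. In these types I would supplement the argument by a finer case analysis based on the marked Satake diagram of the Hermitian pair, using the restrictive root-length configurations available when $\gamma$ is cominuscule (so that $\gamma$ is the long simple root for $C_n$ and one of the long end-nodes for $B_n$). This bookkeeping is the delicate part; the rest of the proof is essentially formal.
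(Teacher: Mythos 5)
The paper does not prove this lemma (it cites Jakobsen), so your outline has to stand on its own. Your simply-laced argument is sound, and can be finished more cleanly than by a case-by-case inspection: having derived $\alpha+\pi_1+\pi_2+\pi_3\in\Delta(\p^+)$ from pairwise orthogonality, note that $(\alpha,\,\alpha+\pi_1+\pi_2+\pi_3)=2-3=-1<0$, so $2\alpha+\pi_1+\pi_2+\pi_3$ would be a root; but $\alpha$ and $\alpha+\pi_1+\pi_2+\pi_3$ both lie in $\Delta(\p^+)$ and $\p^+$ is abelian, so no two of its roots can sum to a root. Contradiction, and no type-by-type enumeration is needed.

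The genuine gap is the non-simply-laced case, which you defer to ``finer bookkeeping'' and predict will be ``essentially formal.'' It is not: the orthogonality conclusion of the lemma actually fails there. In $\g_\R=\sp(3,\R)$ (type $C_3$, noncompact simple root $\gamma=\alpha_3=2\varepsilon_3$), take $\alpha=\varepsilon_2+\varepsilon_3\in\Delta(\p^+)$, $\pi_1=\alpha_1=\varepsilon_1-\varepsilon_2$, $\pi_2=\alpha_2=\varepsilon_2-\varepsilon_3$. Then $\alpha+\pi_1=\varepsilon_1+\varepsilon_3\in\Delta(\p^+)$ and $\alpha+\pi_2=2\varepsilon_2\in\Delta(\p^+)$, yet $(\pi_1,\pi_2)=-1\neq 0$. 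The failure is exactly the mechanism you flagged: the $\pi_2$-string through $\alpha$ is $2\varepsilon_3,\ \varepsilon_2+\varepsilon_3,\ 2\varepsilon_2$, of length three, so $(\alpha,\pi_2^\vee)=0$ rather than $-1$ and the ``$-1+(-1)=-2$'' step has nothing to grab onto. The other two conclusions, $k\le2$ and $\alpha+\pi_1+\pi_2\in\Delta(\p^+)$, do survive (here $\alpha+\pi_1+\pi_2=\varepsilon_1+\varepsilon_2$), and those are the only parts the paper subsequently uses to deduce the order-dimension-two structure of the Hasse diagram. But your plan cannot be completed for the lemma as literally stated, because the orthogonality clause is false in type $C$; it must be restricted to the simply-laced case or dropped before a proof of the kind you sketch can go through.
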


In light of this lemma, the Hasse diagram of $\Delta(\mathfrak{p^{+}})$ is an upward planar graph 
of order dimension two and hence can be drawn on a two-dimensional orthogonal lattice that has been  
rotated by a $45$-degree angle. 

\begin{ex}
Let
$\mathfrak{g}_{\mathbb{R}}=\su(3,2)$. 
Then we have 
\begin{center}
\begin{pspicture}(-7,-1.5)(3,3)
\uput[l](-2,1.3){$
\mathfrak{p}^+=\{\left(\begin{array}{ccc|cc} 
\cdot&\cdot&\cdot&\, * & * \\[-4 pt]
\cdot&\cdot&\cdot&\, * & * \\[-4 pt]
\cdot&\cdot&\cdot&\, * & * \\[-4 pt]
\hline
\cdot&\cdot&\, \cdot & \cdot & \cdot\\[-4 pt]
\cdot&\cdot&\, \cdot & \cdot & \cdot
\end{array}
\right)\}  
$}
\uput[l](.3,1.3){$\Delta(\mathfrak{p}^+)=$}
\cnode*(1.5,.5){.07}{a0}
\cnode*(1,1){.07}{a1}
\cnode*(2,1){.07}{a2}
\cnode*(.5,1.5){.07}{a3}
\cnode*(1.5,1.5){.07}{a4}
\cnode*(1,2){.07}{a7}
\ncline{->}{a0}{a1}
\ncline{->}{a0}{a2}
\ncline{->}{a1}{a3}
\ncline{->}{a1}{a4}
\ncline{->}{a2}{a4}
\ncline{->}{a2}{a5}
\ncline{->}{a3}{a6}
\ncline{->}{a3}{a7}
\ncline{->}{a4}{a7}
\ncline{->}{a4}{a8}
\ncline{->}{a5}{a8}
\ncline{->}{a6}{a9}
\ncline{->}{a7}{a9}
\ncline{->}{a7}{a10}
\ncline{->}{a8}{a10}
\ncline{->}{a9}{a11}
\ncline{->}{a10}{a11}
\uput[r](1.15,.2){$\alpha_{3}=e_3-e_4$}
\uput[r](.75,2.35){$\beta =e_1-e_5$}
\uput[d](1.1,.9){\scriptsize{2}}
\uput[d](0.6,1.4){\scriptsize{1}}
\uput[u](0.6,1.6){\scriptsize{4}}
\end{pspicture}
\end{center}

\end{ex}

The antichains in $\Delta(\p^+)$ are given in Appendix.

A subset $Y\subseteq \Delta(\fp^+)$ is called a \emph{lower-order ideal} if, for  $\alpha \in \Delta(\fp^+)$ and $\beta \in Y$, $\alpha \leq \beta$ implies that $\alpha \in Y$. 


\begin{dfn}
For $\lambda \in \Lambda^+(\fk)$,
define the \emph{diagram} of $\lambda$ as the set
\begin{equation}
Y_{\lambda} :=\{\alpha\in \Delta(\fp^+)\mid ({\lambda +\rho},{\alpha^\vee})\in \Z_{\leq 0}\},
\end{equation}
viewed as a subposet of $\Delta_\lambda(\fp^+):=\Delta_\lambda \cap \Delta(\fp^+)$, where 
$\Delta_\lambda:=\{\alpha\in \Delta \mid ({\lambda+\rho},{\alpha^\vee})\in \Z\}$
is the integral root system associated to $\lambda$.
\end{dfn}

By \cite[Lem 2.2]{BHXZ},  the poset $Y_{\lambda}$ is a lower order ideal of $\Delta(\fp^+)$ when $\lambda$ is integral.

An \emph{antichain} in a poset is a subset consisting of pairwise noncomparable elements.
The \emph{width} of a poset is the cardinality of maximal antichain in the poset.
 Suppose $\lambda\in \Lambda^+(\mathfrak{k})$, we use
	$m=m(\lambda)$ to denote the width of  $Y_{\lambda}$.



By inspection of the Hasse diagram of $\Delta(\p^+)$, we have the following lemma.

\begin{lemma}\label{antichain}
Suppose $Y\subseteq \Delta(\p^+)$ be a lower order ideal. If $m$ is the width of $Y$, then there exists 
an antichain $A\subseteq Y$ of length $m$ such that all the roots in $A$ 
have the same height.
\end{lemma}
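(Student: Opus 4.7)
The plan is to reduce the statement to a case-by-case inspection of the Hasse diagrams of $\Delta(\p^+)$ for each Hermitian symmetric pair, as collected in the Appendix. First, I would note that every covering relation $\beta < \alpha$ in the root poset satisfies $\alpha-\beta\in\Pi$, so the height function $\operatorname{ht}$ is a rank function on $\Delta(\p^+)$. Consequently, each level set $\Delta(\p^+)_h=\{\alpha:\operatorname{ht}(\alpha)=h\}$ is an antichain, and so is its intersection $Y\cap\Delta(\p^+)_h$ with any lower order ideal $Y$. The lemma then amounts to the claim that $\max_h|Y\cap\Delta(\p^+)_h|=m$.

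Using the Jakobsen lemma just above, $\Delta(\p^+)$ has order dimension at most two and embeds order-isomorphically into $\Z^2$ with componentwise order; under this embedding, the height corresponds to the linear form $x+y$ up to an additive shift. An antichain in $Y$ thus becomes a strictly $x$-increasing, strictly $y$-decreasing sequence of lattice points inside the region corresponding to $Y$. Starting from a maximum antichain $A=\{\alpha_1,\ldots,\alpha_m\}$, I would attempt to flatten $A$ to a constant-height antichain $A'\subseteq Y$ by iteratively descending any $\alpha_i$ of above-minimum height along covering relations. Such descendants remain in $Y$ because $Y$ is closed under taking smaller elements, and the upward-planar structure of the Hasse diagram on the rotated lattice makes it possible to choose descents that preserve pairwise incomparability at every step.

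To carry this out concretely: for $\g_\R=\su(p,q)$, $\Delta(\p^+)$ is a product of two chains, lower order ideals are Young diagrams, and the claim is the classical fact that the width of a Young diagram equals the length of its longest antidiagonal. For $\so(2,n)$, $\sp(n,\R)$, and $\so^*(2n)$, the analogous statement for the corresponding (shifted) staircase posets is verified directly. For the exceptional pairs associated with $E_6$ and $E_7$, the posets are finite with $16$ and $27$ elements, so the claim can be checked by explicit enumeration of antichains inside each lower order ideal.

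The main obstacle is the flattening step, namely, that a maximum antichain can always be deformed into a single rank level without loss of length. This relies on specific combinatorial features of the posets $\Delta(\p^+)$ arising from Hermitian symmetric pairs (they are minuscule, and in particular Sperner, with much more structure than an arbitrary graded distributive poset). A uniform argument could invoke the strong Sperner property of minuscule posets, but since the lemma's phrasing already signals ``by inspection,'' the cleanest route is the type-by-type verification sketched above.
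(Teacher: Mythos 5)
Your proposal takes essentially the same route as the paper, which in fact offers no proof beyond the single line ``By inspection of the Hasse diagram of $\Delta(\mathfrak{p}^+)$.'' Your reduction is correct: height is a rank function on $\Delta(\mathfrak{p}^+)$ (covering relations are simple-root differences), each level set $\Delta(\mathfrak{p}^+)_h$ is an antichain, and so the lemma amounts to showing $\max_h |Y\cap\Delta(\mathfrak{p}^+)_h| = \operatorname{width}(Y)$ for every lower order ideal $Y$; equivalently, every lower order ideal of $\Delta(\mathfrak{p}^+)$ has the Sperner property with respect to the induced rank. For $\su(p,q)$ this is the classical max-antichain-equals-longest-antidiagonal fact for Young diagrams; for the other classical types and the two exceptional cases the posets are shifted staircases, a diamond, and two small posets, so the type-by-type check you sketch is exactly what the paper is implicitly invoking.

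One caveat worth flagging: your suggestion that a uniform argument ``could invoke the strong Sperner property of minuscule posets'' is not quite adequate as stated. The (strong) Sperner property is a property of a graded poset $P$ itself and does not automatically pass to lower order ideals of $P$ with their induced rank functions, which is what the lemma actually requires. (There are Sperner posets with non-Sperner ideals.) What would suffice is a hereditary strengthening such as the normalized matching property / LYM property for these posets, or, more directly, the flattening/jeu-de-taquin-style exchange argument you gesture at, which does work case by case on the rotated lattice picture because the ideals are staircase-shaped. Also, $\Delta(\mathfrak{p}^+)$ is a minuscule poset only in the simply-laced cases; for $\sp(n,\R)$ and $\so(2,2n-1)$ one needs the cominuscule picture. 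Since you ultimately fall back on the explicit verification, none of this affects the correctness of your approach, but it would mislead a reader into thinking a one-line citation closes the general case.
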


	\begin{lemma}\label{+1}
		Let $\lambda \in \Lambda^+(\k)$. If  $\AV(L(\lambda))=\overline{\O_k}$ with $0\leq k\leq r-1$,  then $(\lambda +\rho,\alpha^\vee)>0$ for some $\alpha \in A_k$.
	\end{lemma}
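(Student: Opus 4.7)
The plan is to argue the contrapositive: assuming that $(\lambda+\rho,\alpha^\vee)\leq 0$ for every $\alpha\in A_k$, I aim to show $\AV(L(\lambda))\neq\overline{\O_k}$. Since $\lambda\in\Lambda^+(\k)$ and the roots in $A_k$ are integral, the pairings in question are non-positive integers, so the hypothesis is equivalent to the containment $A_k\subseteq Y_\lambda$.

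The next step is to leverage the lower-order-ideal structure of $Y_\lambda$, established in \cite[Lem 2.2]{BHXZ}, to upgrade the containment of the antichain $A_k$ to the containment of its full downward closure $\langle A_k\rangle$ in $Y_\lambda$. Applying Lemma~\ref{antichain} then forces the width $m(\lambda)$ of $Y_\lambda$ to be at least $|A_k|$. The heart of the argument is to verify, using the Hasse diagrams and antichain lists for $\Delta(\p^+)$ in the Appendix, that for each Hermitian symmetric type the constant $c$ and the height $k\lceil c\rceil+1$ are calibrated so that $|A_k|$ strictly exceeds the maximum width of any lower order ideal compatible with $\AV(L(\lambda))=\overline{\O_k}$. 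Equivalently, containing $A_k$ pushes $Y_\lambda$ one rung further up the Harish-Chandra/Kor\'anyi strongly orthogonal chain than is consistent with $\AV(L(\lambda))=\overline{\O_k}$, yielding the desired contradiction.

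The main obstacle is making this width (or strongly orthogonal content) comparison precise in a type-uniform way. I anticipate that the verification reduces to a finite case-by-case check enabled by the Appendix, together with the standard combinatorial dictionary between lower order ideals in $\Delta(\p^+)$ and the chain of $K$-orbits $\overline{\O_0}\subset\cdots\subset\overline{\O_r}=\p^+$ in the Hermitian symmetric setting; an additional subtlety I would watch for is the parity distinction between $\Z$- and $(\tfrac12+\Z)$-valued $z$ in the $BC$ types, since the definition of $A_k$ and of $z_k(\lambda_0)$ both branch on the parity of $k$. Once the strict width comparison is established, the conclusion $A_k\not\subseteq Y_\lambda$ is immediate and the lemma follows.
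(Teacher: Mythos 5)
Your overall strategy (assume the pairings with $A_k$ are all nonpositive, conclude $A_k\subseteq Y_\lambda$, use that $A_k$ is an antichain to bound the width $m(\lambda)$ from below, and derive a contradiction with the dictionary between $m(\lambda)$ and the orbit index $k(\lambda)$ from \cite{BHXZ}) is essentially the argument the paper gives. There are, however, two points worth flagging.

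First, the step where you claim ``Since $\lambda\in\Lambda^+(\k)$ and the roots in $A_k$ are integral, the pairings in question are non-positive integers'' is not justified: membership in $\Lambda^+(\k)$ only guarantees integrality against compact coroots, not against the noncompact roots in $A_k$, and without integrality the inequality $(\lambda+\rho,\alpha^\vee)\le 0$ does not put $\alpha$ into $Y_\lambda$ (which is defined via $\Z_{\le 0}$). The integrality (or the appropriate half-integrality in the odd $BC$ cases) is exactly what the main theorem of \cite{BHXZ} supplies once you assume $\AV(L(\lambda))=\overline{\O_k}$ with $k\le r-1$. This makes the ``argue by contradiction'' formulation cleaner than the pure contrapositive: by assuming $\AV(L(\lambda))=\overline{\O_k}$ you get from \cite{BHXZ} both that $\lambda$ is integral/half-integral in the relevant sense and that $m(\lambda)=k$, $2m(\lambda)=k$, or $2m(\lambda)+1=k$; then $A_k\subseteq Y_\lambda$ forces $m(\lambda)\ge|A_k|$, which contradicts the exact value of $m(\lambda)$ in each case since $|A_k|=k+1$ in types $ADE$ and $|A_k|=\lfloor k/2\rfloor+1$ in types $BC$. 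If you insist on the contrapositive you must add the (trivial) case split: if $\lambda$ is not of the prescribed (half-)integrality type, then $k(\lambda)=r\neq k$ automatically.

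Second, the appeal to Lemma~\ref{antichain} is unnecessary here and does not do the work you want: that lemma produces a maximum-size antichain lying at a single height, which is useful in the other direction, but for the present lemma the containment $A_k\subseteq Y_\lambda$ already exhibits an antichain of size $|A_k|$ inside $Y_\lambda$ and hence directly gives $m(\lambda)\ge|A_k|$. The real numerical input is the precise formula for $|A_k|$ together with the $m(\lambda)\leftrightarrow k(\lambda)$ correspondence from \cite{BHXZ}, not the equal-height antichain lemma.
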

Recall that $A_k=\Delta(\p^+)_{k\left\lceil c \right\rceil+1}$ is an antichain in $\Delta(\p^+)$.	
	\begin{proof} Suppose $\AV(L(\lambda))=\overline{\O_k}$ with $0\leq k\leq r-1$. 
		
		When $\Delta$ is simply-laced, by our main theorem in \cite{BHXZ}, $\lambda$ is integral and $m(\lambda)=k$.
		Assume that $(\lambda +\rho,\alpha^\vee)\leq 0$ for all $\alpha \in A_k$. Since $A_k$ is an antichain of length $k+1$, it would follow that $m(\lambda)\geq k+1$. Contradiction.
		
		When $\Delta$ is non-simply-laced and $k=2l$ is even, by our main theorem in \cite{BHXZ}, $\lambda$ is integral and $2m(\lambda)=k$. Assume that $(\lambda +\rho,\alpha^\vee)\leq 0$ for all $\alpha \in A_k$. Since $A_k$ is an antichain of length $l+1$, it would follow that $m(\lambda)\geq l+1$. Then $2m(\lambda)\geq 2l+2=k+2>k$. Contradiction!
		
		When $\Delta$ is non-simply-laced and $k=2l+1$ is odd, by our main theorem in \cite{BHXZ}, $\lambda$ is half-integral and $2m(\lambda)+1=k$. Assume that $(\lambda +\rho,\alpha^\vee)\leq 0$ for all $\alpha \in A_k$. Since $A_k$ is an antichain of length $l+1$, it would follow that $m(\lambda)\geq l+1$. Then $2m(\lambda)+1\geq 2l+3=k+2>k$. Contradiction!
		
	\end{proof}

	\section{Gelfand--Kirillov dimension and associated variety}\label{GKAV}

	%
	%
	
	In this section, we will recall some preliminaries on Gelfand--Kirillov dimensions and associated varieties of highest weight modules. See \cites{Vo78, Vo91}  for more details.

	
	Let $M$ be a finite generated $U(\mathfrak{g})$-module. Fix a finite dimensional generating space $M_0$ of $M$. Let $U_{n}(\mathfrak{g})$ be the standard filtration of $U(\mathfrak{g})$. Set $M_n=U_n(\mathfrak{g})\cdot M_0$ and
	\(
	\text{gr} (M)=\bigoplus\limits_{n=0}^{\infty} \text{gr}_n M,
	\)
	where $\text{gr}_n M=M_n/{M_{n-1}}$. Thus $\text{gr}(M)$ is a graded module of $\text{gr}(U(\mathfrak{g}))\simeq S(\mathfrak{g})$.

	
The \textit{Gelfand--Kirillov dimension} of $M$  is defined by
		\begin{equation*}
		\operatorname{GKdim} M = \overline{\lim\limits_{n\rightarrow \infty}}\frac{\log\dim( U_n(\mathfrak{g})M_{0} )}{\log n}.
		\end{equation*}	
The  \textit{associated variety} of $M$ is defined by
		\begin{equation*}
		\AV(M):=\{X\in \mathfrak{g}^* \mid f(X)=0 \text{ for all~} f\in \operatorname{Ann}_{S(\mathfrak{g})}(\operatorname{gr} M)\}.
		\end{equation*}	
These  two definitions are independent of the choice of $M_0$, and $\dim V(M)=\gk M$ (e.g., \cite{NOT}). 
	If $M_0$ is $\mathfrak{a}$-invariant for a subalgebra $\mathfrak{a}\subset\mathfrak{g}$, then 
	\begin{equation}\label{embed}
	\AV(M)\subset (\mathfrak{g}/\mathfrak{a})^*.
	\end{equation}
	
	
	%

	When $M=L(\lambda)$ is a highest weight Harish-Chandra module, we can choose $M_0$ to be the finite dimensional $U(\mathfrak{k})$-module generated by $\mathbb{C}_{\lambda}$. Then $M_0$ is $\mathfrak{k}\oplus\mathfrak{p}^+$-invariant. In view of \eqref{embed},
	\[
	\AV(L(\lambda))\subset(\mathfrak{g}/(\mathfrak{k}\oplus\mathfrak{p}^+))^*\simeq(\mathfrak{p}^-)^*\simeq \mathfrak{p}^+,
	\]
	where the last isomorphism is induced from the Killing form. As shown in \cite{Vo91}, the associated variety $\AV(M)$ is also $K$-invariant. In fact, Yamashita \cite{Hir01} proved that $ \AV(M) $ must be one of $ \overline{\mathcal{O}_{k}} $.
	
	\begin{lemma}
		Let $L(\lambda)$ be a highest weight Harish-Chandra module. Then 
		\begin{equation*}
		\AV(L(\lambda))=\overline{\mathcal{O}_{k(\lambda)}}
		\end{equation*}
		for some $0\leq k(\lambda)\leq r$.
	\end{lemma}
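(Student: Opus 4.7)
The plan is to assemble the statement from three ingredients already collected in this section: the embedding of the associated variety into $\mathfrak{p}^+$, its $K$-invariance, and the total-ordering of the $K$-orbit closures in $\mathfrak{p}^+$. Concretely, I would first record that $\AV(L(\lambda))$ is by construction a closed subvariety of $\mathfrak{g}^*$, being the zero set of an ideal of $S(\mathfrak{g})$. Taking the generating subspace $M_{0}$ to be the finite-dimensional $\mathfrak{k}$-module generated by the highest weight vector, as done above, makes $M_{0}$ stable under $\mathfrak{a}:=\mathfrak{k}\oplus\mathfrak{p}^{+}$, so by \eqref{embed} the variety sits inside $(\mathfrak{g}/\mathfrak{a})^{*}\cong\mathfrak{p}^{+}$.

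Next I would invoke Vogan's theorem that $\AV(L(\lambda))$ is $K$-invariant. Thus $\AV(L(\lambda))$ is a closed, $K$-stable subset of $\mathfrak{p}^{+}$, and so is a (finite) union of $K$-orbit closures in $\mathfrak{p}^{+}$. The key structural input is then that these orbit closures form a chain
\[
\{0\}=\overline{\mathcal{O}_{0}}\subset\overline{\mathcal{O}_{1}}\subset\cdots\subset\overline{\mathcal{O}_{r}}=\mathfrak{p}^{+},
\]
so any finite union of them collapses to the single largest term. Setting $k(\lambda)$ to be the largest index $k$ for which $\mathcal{O}_{k}\subseteq\AV(L(\lambda))$ therefore gives $\AV(L(\lambda))=\overline{\mathcal{O}_{k(\lambda)}}$.

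Finally I would check well-definedness: $M_{0}\neq 0$ forces the annihilator of $\operatorname{gr} M$ in $S(\mathfrak{g})$ to be a proper ideal, so $0\in\AV(L(\lambda))$ and the index $k(\lambda)$ exists with $0\leq k(\lambda)\leq r$.

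There is no real obstacle here: the lemma is essentially a bookkeeping consequence of Yamashita's theorem together with the chain structure recalled above. The only point one must be careful about is the a priori possibility that $\AV(L(\lambda))$ is a union of several $\overline{\mathcal{O}_{k}}$; that possibility is ruled out precisely because the $K$-orbit closures in $\mathfrak{p}^{+}$ are totally ordered, which is the special feature of the Hermitian symmetric setting.
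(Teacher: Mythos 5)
Your proposal is correct and follows essentially the same route as the paper: it uses the same three ingredients (the embedding $\AV(L(\lambda))\subset\mathfrak{p}^+$ coming from \eqref{embed}, Vogan's $K$-invariance result, and the total ordering of the $K$-orbit closures in $\mathfrak{p}^+$). The only difference is that where the paper delegates the final step to Yamashita's cited theorem, you spell out the short direct argument that a nonempty closed $K$-stable cone in a space with finitely many, linearly ordered, $K$-orbit closures must itself be a single orbit closure.
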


We have the following table from  \cite{EHW}: 
	\begin{center}
		\begin{table}[htb]
			{
				\begin{tabular}
    {cccc}
					\hline
					$\mathfrak{g}_{\R}$ &   $r$ & $c$ & $(\rho,\beta^\vee)$ \\  
					\hline  
					$\mathfrak{su}(p,n-p)$ & $\min\{p, n-p\}$ &  $1$ & $n-1$  \\ 
					$\mathfrak{sp}(n,\mathbb{R})$  & $n$ &   $1/2$ & $n$   \\ 
					$\mathfrak{so}^{*}(2n)$  & $[n/2]$  & $2$ & $2n-3$ \\ 
					$\mathfrak{so}(2,2n-1)$  & $2$  &  $n-3/2$ & $2n-2$ \\ 
					$\mathfrak{so}(2,2n-2)$  & $2$ &  $n-2$& $2n-3$ \\ 
					$\mathfrak{e}_{6(-14)}$  & $2$ &  $3$& $11$ \\ 
					$\mathfrak{e}_{7(-25)}$  & $3$ &  $4$& $17$ \\ 
					\hline
				\end{tabular}
			}
			\caption{Some constants of  Lie groups of Hermitian type}\label{constants-k}
		\end{table}
	\end{center}

	In \cite{BH}, we have found a uniform expression for the GK dimensions and associated varieties of unitary highest weight Harish-Chandra modules.
	
	\begin{prop}[{\cite{BH}}]\label{C: dimYk}
		Suppose $L(\lambda)$ is a unitary highest weight Harish-Chandra module with highest weight $\lambda$. We denote $z=z(\lambda)=(\lambda+\rho,\beta^{\vee})$, then
		\begin{align*}\gk L(\lambda)=
		\begin{cases}
		rz_{r-1}, & \mbox{if  $z<z_{r-1}$}\\
		kz_{k-1}, & \mbox{if  $z=z_{k}=(\rho,\beta^{\vee})-kc, 1\leq k\leq r-1$}\\
		0, & \mbox{if $z=z_{0}=(\rho,\beta^{\vee})$}.
		\end{cases}
		\end{align*}
		
		Denote $k=k(\lambda):=-\frac{(\lambda,  \beta^{\vee} )}{c}$.   Then
		\begin{enumerate}
			\item  If $k>r-1$, we have $\gk L(\lambda)=rz_{r-1}=\frac{1}{2}\dim(G/K).$
			\item If $0\leq k\leq r-1$, then $k$ is a non-negative integer and $$\gk L(\lambda)=k((\rho, \beta^{\vee})-(k-1)c)=kz_{k-1}=\dim \overline{\mathcal{O}_{k(\lambda)}}.$$
		\end{enumerate}
		The associated variety of $L(\lambda)$ is $\overline{\mathcal{O}_{k(\lambda)}}$.

	\end{prop}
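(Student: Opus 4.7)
The plan is to combine the Enright--Howe--Wallach classification of unitary highest weight modules with Yamashita's theorem (that $\AV(L(\lambda))$ is always one of the $\overline{\O_k}$) and a dimension formula for the $K$-orbit closures in $\p^+$.

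First I would establish the orbit dimension formula $\dim \overline{\O_k} = k((\rho,\beta^\vee) - (k-1)c) = k z_{k-1}$. Since the $K$-orbits on $\p^+$ are indexed by rank in the sense of the underlying Jordan triple structure on $\p^+$, this can be checked type by type from Table~\ref{constants-k}. For example, in type $A$ with $\g_\R = \su(p,q)$, identifying $\p^+$ with $p\times q$ matrices makes $\overline{\O_k}$ the variety of matrices of rank $\leq k$, whose dimension $k(p+q-k)$ matches $k((p+q-1) - (k-1)\cdot 1) = k z_{k-1}$; the other five Hermitian types are handled analogously, with the exceptional cases reducing to known dimensions of nilpotent orbits in $\mathfrak{e}_6$ and $\mathfrak{e}_7$.

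Next I would read off $\AV(L(\lambda))$ from the EHW classification. In the continuous regime $z(\lambda) < z_{r-1}$ the generalized Verma module $N(\lambda)$ is irreducible, so $L(\lambda) = N(\lambda)$ and $\AV(L(\lambda)) = \p^+ = \overline{\O_r}$, giving $\gk L(\lambda) = \dim\p^+ = r z_{r-1} = \tfrac{1}{2}\dim(G/K)$. At a discrete reduction point $z(\lambda) = z_k$ with $0 \leq k \leq r-1$, a resolution of $L(\lambda)$ by generalized Verma modules (obtained for the classical types from Howe duality with the appropriate dual pair, with the two exceptional types handled separately) exhibits the associated graded of $L(\lambda)$, as an $S(\p^-)$-module, as having support exactly $\overline{\O_k}$, so $\AV(L(\lambda)) = \overline{\O_k}$ and $\gk L(\lambda) = k z_{k-1}$. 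The reformulation via $k(\lambda) = -(\lambda,\beta^\vee)/c$ is immediate: at $z(\lambda) = z_k$ one has $(\lambda,\beta^\vee) = z_k - (\rho,\beta^\vee) = -kc$, so $k(\lambda) = k$; and $z(\lambda) < z_{r-1}$ is equivalent to $k(\lambda) > r-1$.

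The principal obstacle is pinning down $\AV(L(\lambda))$ at the discrete Wallach points, since unitarity alone does not force $\AV(L(\lambda)) \subseteq \overline{\O_k}$; one needs explicit knowledge of the composition series of $N(\lambda)$ at those points, which is supplied by Howe duality in the classical cases and by a direct calculation for $\mathfrak{e}_{6(-14)}$ and $\mathfrak{e}_{7(-25)}$. Once these inputs are assembled, the three-case formula for $\gk L(\lambda)$ is simply the concatenation of the continuous-range computation and the dimension table for $\overline{\O_k}$.
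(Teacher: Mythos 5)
The paper reproves the GK-dimension part of this proposition in a Corollary in Section~7, by a route completely different from yours. There, the formula is deduced from Theorem~\ref{conj} together with Yamashita's theorem that $\AV(L(\lambda))$ is always one of the orbit closures $\overline{\O_{k(\lambda)}}$: if $L(\lambda)$ is unitary and $k(\lambda)\leq r-1$, then Theorem~\ref{conj} forces $z=z_{k(\lambda)}$; since $z_0>z_1>\cdots>z_{r-1}$, the equality $z=z_k$ pins down $k(\lambda)=k$, while $z<z_{r-1}$ is incompatible with any $k(\lambda)\leq r-1$ and so forces $k(\lambda)=r$. All case-by-case content is thereby absorbed into Theorem~\ref{conj}; no resolutions or Howe duality are ever invoked. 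Your proposal, by contrast, is the direct computation of associated varieties via the EHW classification and resolutions, closer in spirit to what the cited reference \cite{BH} and its background literature do.

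However, your argument has a genuine gap in the ``continuous regime.'' You assert that $z<z_{r-1}$ implies that $N(\lambda)$ is irreducible, so that $L(\lambda)=N(\lambda)$. This is false in general: the first reduction point of $N(\lambda_0+z\zeta)$ depends on $\lambda_0$ and can lie strictly below the constant $z_{r-1}$. The paper's own example with $\mathfrak{g}_\R=\mathfrak{su}(4,3)$ and $\lambda_0=(0,0,0,-20,8,6,6)$ makes this concrete: there $z_{r-1}=z_2=4$, and yet $z=3$ is already a reduction point (so $N(\lambda_0+3\zeta)$ is reducible) at which $L(\lambda_0+3\zeta)$ is nevertheless unitary with $\gk L(\lambda)=12=rz_{r-1}$. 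Irreducibility of $N(\lambda)$ covers only the $z$ strictly below the first reduction point of $N(\lambda_0+z\zeta)$; you would still need a separate argument that for unitary $L(\lambda)$ the GK dimension stays equal to $\dim\p^+$ at reduction points $z$ lying between the first reduction point and $z_{r-1}$. This is precisely the regime where the paper's contrapositive deduction from Theorem~\ref{conj} does essential work that the irreducibility claim cannot replace.
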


	\section{Proof of the main theorem:  simply-laced cases}
	In this section, we assume that $\Delta$ is simply-laced.

	
	
	For  any integer $0\leq k\leq r-1$,  $A_k$ is the antichain in $\Delta(\mathfrak{p}^+)$ such that  $|A_{k}|=k+1$ and $A_k=\Delta(\p^+)_h$ with $h$ minimal.
	
	To prove our Theorem \ref{conj}, we need the following useful lemma.

	\begin{lemma}\label{lamrho-} 
 Let $\tau\in \Lambda^+(\k)$.  If $\lambda_0 = \tau-(\tau+\rho,\beta^\vee)\zeta$ and $\lambda = \lambda_0+z\zeta$, then, for any $\alpha\in \Delta(\p^+)_h$,  
		$$
		(\lambda+\rho,\alpha^\vee) =  z-(\rho,\beta^\vee) +h - (\tau,\beta^\vee -\alpha^\vee).
		$$
	\end{lemma}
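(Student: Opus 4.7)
The plan is to prove this identity by a direct algebraic expansion of $(\lambda+\rho,\alpha^\vee)$, using only the defining properties of $\zeta$ (orthogonality to $\Delta(\k)$ together with $(\zeta,\beta^\vee)=1$) and the fact that in the simply-laced setting the coroot expansion of a root has the same coefficients as the root itself. First I would rewrite $\lambda+\rho = (\tau+\rho) + \bigl(z-(\tau+\rho,\beta^\vee)\bigr)\zeta$, so that pairing with $\alpha^\vee$ yields
$$(\lambda+\rho,\alpha^\vee) = (\tau+\rho,\alpha^\vee) + \bigl(z-(\tau+\rho,\beta^\vee)\bigr)(\zeta,\alpha^\vee).$$

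The next step is to evaluate $(\rho,\alpha^\vee)$ and $(\zeta,\alpha^\vee)$ separately. The Hermitian symmetric hypothesis guarantees a unique noncompact simple root $\pi_0$, and every $\alpha\in \Delta(\p^+)$ has coefficient $1$ on $\pi_0$ in its expansion in simple roots; the simply-laced assumption then forces the same coefficients in the expansion of $\alpha^\vee$ in simple coroots. Using $(\rho,\pi^\vee)=1$ for every simple root $\pi$ gives $(\rho,\alpha^\vee)=\operatorname{ht}(\alpha)=h$, while $(\zeta,\pi^\vee)=0$ for every compact simple root forces $(\zeta,\alpha^\vee)=(\zeta,\pi_0^\vee)$. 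Specializing to $\alpha=\beta$ identifies this common value as $(\zeta,\pi_0^\vee)=(\zeta,\beta^\vee)=1$, so that $(\zeta,\alpha^\vee)=1$ uniformly on $\Delta(\p^+)$.

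Substituting these values into the expansion together with the splittings $(\tau+\rho,\alpha^\vee)=(\tau,\alpha^\vee)+h$ and $(\tau+\rho,\beta^\vee)=(\tau,\beta^\vee)+(\rho,\beta^\vee)$ and collecting terms produces
$$(\lambda+\rho,\alpha^\vee) = (\tau,\alpha^\vee)+h+z-(\tau,\beta^\vee)-(\rho,\beta^\vee) = z-(\rho,\beta^\vee)+h-(\tau,\beta^\vee-\alpha^\vee),$$
which is the claimed formula. The argument is a purely formal manipulation, so there is no real obstacle; the only mildly subtle ingredient is the uniform equality $(\zeta,\alpha^\vee)=1$ on $\Delta(\p^+)$, and this rests precisely on the two structural observations above.
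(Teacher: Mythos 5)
Your proposal is correct and follows essentially the same direct expansion used in the paper: both substitute $\lambda_0 = \tau - (\tau+\rho,\beta^\vee)\zeta$, pair with $\alpha^\vee$, and invoke $(\zeta,\alpha^\vee)=1$ and $(\rho,\alpha^\vee)=h$ in the simply-laced case before collecting terms. You additionally spell out why $(\zeta,\alpha^\vee)=1$ holds uniformly on $\Delta(\p^+)$ (unique noncompact simple root with coefficient $1$, simply-laced forces matching coroot coefficients), which the paper leaves implicit.
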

	\begin{proof}
		Let $\alpha\in \Delta(\p^+)_h$. Since $\Delta$ is simply-laced,  $(\zeta,\alpha^\vee)=1$ and $(\rho,\alpha^\vee)=h$. Thus,
		\begin{align*}
		(\lambda+\rho,\alpha^\vee) &= (\lambda_0+z\zeta+\rho,\alpha^\vee)\\ &=  (\tau,\alpha^\vee) -(\tau+\rho,\beta^\vee)(\zeta,\alpha^\vee) +z(\zeta,\alpha^\vee)+(\rho,\alpha^\vee)\\
		& = (\tau,\alpha^\vee) -(\tau+\rho,\beta^\vee)  +z +h \\
		& = (\tau,\alpha^\vee-\beta^\vee) -(\rho,\beta^\vee)  +z +h \\
		&= z-(\rho,\beta^\vee) +h - (\tau,\beta^\vee -\alpha^\vee).
		\end{align*}
	\end{proof}

Now we can prove our Theorem \ref{conj}. The idea is very simple. From Lemma \ref{+1}, since $\AV(L(\lambda))=\overline{\mathcal{O}_{k}}$, we have $(\lambda +\rho,\alpha^\vee)>0$ for some $\alpha \in A_k= \Delta(\mathfrak{p^{+}})_{k\left\lceil c \right\rceil+1}$. 
Then $\operatorname{ht}(\alpha)=kc+1$. So if $(\lambda +\rho,\beta^\vee)=z=z_k=(\rho,\beta^\vee)-kc$, we will have 
\begin{align*}
		(\lambda+\rho,\alpha^\vee) &= z-(\rho,\beta^\vee) +h - (\tau,\beta^\vee -\alpha^\vee)\\
  &=(\rho,\beta^\vee)-kc-(\rho,\beta^\vee) +kc+1 - (\tau,\beta^\vee -\alpha^\vee)\\
  &=1-(\tau,\beta^\vee -\alpha^\vee)>0.
		\end{align*}
This condition is very restrictive. In the following, we will give a case-by-case discussion for this condition, which will imply the unitarity of $L(\lambda)$.

	\subsection{Case $\g_\R=\su(p,q)$}
	For  $0\leq k\leq r-1=\min\{p,q\}-1$,
	$$
	A_k=\Delta(\p^+)_{k+1}=\{\varepsilon_{p-k+i} -\varepsilon_{p+i+1} = [\, \underbrace{0,\ldots,0}_{p-k+i-1},\underbrace{1,\ldots,1}_{k+1} ,\underbrace{0,\ldots,0}_{q-i-1}\, ] \mid 0\leq i \leq k\}.
	$$
	(Here and in the following we write $[n_1,n_2,\ldots]:=n_1\alpha_1+n_2\alpha_2+\cdots$, where $\alpha_1,\alpha_2,\ldots$ are the simple roots.)
	
	\medskip\noindent
	Fix $\tau =a_1\omega_1+\cdots + a_{p-1}\omega_{p-1}+b_{q-1}\omega_{p+1}+\cdots + b_1\omega_{p+q-1}\in \Lambda^+(\k)$. 
	Then 
	$$
	\lambda_0  = \tau-(p+q-
	1+a_1+\cdots a_{p-1}+ b_{q-1}+\cdots +b_1)\zeta.
	$$
	For $\alpha=\varepsilon_{p-k+i} -\varepsilon_{p+i+1}$,   
	$$
	(\lambda_0 +z\zeta +\rho,\alpha^\vee)= (z-(p+q-k-2))- (a_1+\cdots+a_{p-k+i-1}+b_{q-i-1}+\cdots +b_1).
	$$
	Now suppose $\lambda =\lambda_0+z\zeta$ such that  $\AV(L(\lambda))=\overline{\mathcal{O}_{k}}$ with $0\leq k\leq r-1$ and $z= z_k=p+q-k-1$.
	By Lemma \ref{+1},  $(\lambda +\rho,\alpha^\vee)>0$ for some $\alpha \in A_k$. For $\alpha=\varepsilon_{p-k+i} -\varepsilon_{p+i+1}$, by Lemma \ref{lamrho-}   we have 
	$$
	(\lambda +\rho,\alpha^\vee) =1- (a_1+\cdots+a_{p-k+i-1}+b_{q-i-1}+\cdots +b_1) 
	$$
	and hence $(\lambda +\rho,\alpha^\vee)>0$ implies  $a_1=\cdots= a_{p-k+i-1}=0$ and $b_1=\cdots = b_{q-i-1}=0$. It follows that $\lambda$ is of the form
	$$
	\lambda=(\underbrace{-k,\dots,-k}_{\geq 1},\underbrace{*,\ldots,* \,; *,\ldots,*}_k,\underbrace{0,\ldots,0}_{\geq 1}).
	$$
	By \cite{KV}, $L(\lambda)$ is unitary.
	%

	\subsection{Case $\g_\R=\so^*(2n)$}  For  $1\leq k\leq r-1=[\frac{n}{2}]-1$,
	\begin{align*}
	A_k&=\Delta(\mathfrak{p}^+)_{2k+1}=\{[\, \underbrace{0,\ldots,0}_{n-2k-2},\underbrace{1,\ldots,1}_{2k} ,0,1\, ],\\
	&~~~~~~~~~\quad\quad\quad\quad\quad\quad\quad[\, \underbrace{0,\ldots,0}_{n-2k-1+i},\underbrace{1,\ldots,1}_{2k-2i-1},\underbrace{2,\ldots,2}_{i} , 1,1\, ] \mid 0\leq i \leq k-1\},\\
	A_0&= \Delta(\mathfrak{p}^+)_{1}=\{[0,0,\dots,0,1]\}.
	\end{align*}
	Fix $\tau=a_1\omega_1+a_2\omega_2+\cdots +a_{n-1}\omega_{n-1}\in \Lambda^+(\k)$.  Then, since  $\beta=[1, 2, \cdots, 2, 1,1]$,
	$$
	\lambda_0  =\tau-(\tau+\rho,\beta^{\vee})\zeta= \tau-(2n-3+a_1+2a_2+\cdots+2a_{n-2}+ a_{n-1})\zeta.
	$$

	Suppose $\AV(L(\lambda))=\overline{\mathcal{O}_{0}}$ and $z(\lambda)=z_0=2n-3$. By  Lemma \ref{+1} and Lemma \ref{lamrho-},  
	$$1 -(a_1+2a_2+\cdots+2a_{n-2}+a_{n-1})>0$$ 
	and hence $a_1=a_2=\cdots=a_{n-1}=0$. It follows that $\lambda=0$ (trivial representation).

	Now suppose $\lambda =\lambda_0+z\zeta$ such that  $\AV(L(\lambda))=\overline{\mathcal{O}_{k}}$ with $1\leq k\leq r-1$ and $z= z_k=2n-3-2k$.
	By Lemma \ref{+1},  $(\lambda +\rho,\alpha^\vee)>0$ for some $\alpha \in A_k$. 
	So if $n-2k>2$, by Lemma \ref{lamrho-} we have $$1-(a_1+2a_2+\cdots+2a_{n-2k-2}
	+a_{n-2k-1}+\cdots+a_{n-2}+a_{n-1})>0$$
	or $$1-(a_1+2a_2+\cdots+2a_{n-2k-1+i}
	+a_{n-2k+i}+\cdots+a_{n-i-2})>0.$$
	
	In the first case, $a_1=a_2=\cdots=a_{n-1}=0$,  and hence $\lambda=-2k\zeta$ ($k$-th Wallach representation).

	In the second case, $a_1=a_2=\cdots=a_{n-i-2}=0$. 
	It follows that \begin{align*}
	\lambda=&a_{n-i-1}\omega_{n-i-1}+\cdots +a_{n-1}\omega_{n-1}\\
	&-(2k+2a_{n-i-1}+\cdots +2a_{n-2}+a_{n-1})\zeta\\
	=&a_{n-i-1}\omega_{n-i-1}+\cdots +a_{n-1}\omega_{n-1}\\
	&-(2k+2a_{n-i-1}+\cdots +2a_{n-2}+a_{n-1})\omega_{n}.
	\end{align*}
	
	So
	$\lambda$ is of the form
	$$
	\lambda=(\underbrace{-k,\dots,-k}_{n-i- 1},\underbrace{*,\ldots,* }_{i+1}).
	$$
	By \cite{EW}, \cite{EHW} or \cite{DES}, $L(\lambda)$ is unitary.
	
	So if $n-2k=2$, by Lemma \ref{lamrho-} we have $$1-(a_2+\cdots+a_{n-1})>0$$
	or $$1-(a_1+2a_2+\cdots+2a_{n-2k-1+i}
	+a_{n-2k+i}+\cdots+a_{n-i-2})>0.$$
	
	In the first case, $a_2=\cdots=a_{n-1}=0$,  and hence $\lambda=a_1\omega_{1}+(2-n-a_1)\zeta$ (unitary reduction point).

	In the second case, $a_1=a_2=\cdots=a_{n-i-2}=0$. 
	It follows that \begin{align*}
	\lambda=&a_{n-i-1}\omega_{n-i-1}+\cdots +a_{n-1}\omega_{n-1}\\
	&-(2k+2a_{n-i-1}+\cdots +2a_{n-2}+a_{n-1})\zeta\\
	=&a_{n-i-1}\omega_{n-i-1}+\cdots +a_{n-1}\omega_{n-1}\\
	&-(2k+2a_{n-i-1}+\cdots +2a_{n-2}+a_{n-1})\omega_{n}.
	\end{align*}
	
	So
	$\lambda$ is of the form
	$$
	\lambda=(\underbrace{-k,\dots,-k}_{n-i- 1},\underbrace{*,\ldots,* }_{i+1}).
	$$
	By \cite{EW}, \cite{EHW} or \cite{DES}, $L(\lambda)$ is unitary.

	\subsection{Case $\g_\R=\so(2,2n-2)$}  For  $0\leq k\leq r-1=2-1=1$,
	\begin{align*}
	A_0&=\Delta(\mathfrak{p}^+)_{1}=\{\varepsilon_{1}-\varepsilon_{2}=[1,0,...,0]\},\\
	A_1&= \Delta(\mathfrak{p}^+)_{n-1}=\{\varepsilon_{1}-\varepsilon_{n}= [1,\dots,1,0], \varepsilon_{1}+\varepsilon_{n}= [1,\dots,1,0,1]\}.
	\end{align*}
	
	Fix $\tau=a_2\omega_2+\cdots +a_{n-1}\omega_{n-1}+a_n\omega_n\in \Lambda^+(\k)$.  Then, since  $\beta=\varepsilon_{1}+\varepsilon_{2}=[1,2, \cdots, 2,1,1]$,
	$$
	\lambda_0  =\tau-(\tau+\rho,\beta^{\vee})\zeta= \tau-(2n-3+2a_2+\cdots+2a_{n-2}+a_{n-1}+a_n)\zeta.
	$$

	%
	%
	%
	%
	%
	
	
	Suppose $\AV(L(\lambda))=\overline{\mathcal{O}_{0}}$ and $z(\lambda)=z_0=2n-3$. By  Lemma \ref{+1} and Lemma \ref{lamrho-},  
	$$1 -(2a_2+\cdots+2a_{n-2}+a_{n-1}+a_{n})>0$$ 
	and hence $a_2=\cdots=a_{n}=0$. It follows that $\lambda=0$ (trivial representation). 
	
	Suppose $\AV(L(\lambda))=\overline{\mathcal{O}_{1}}$ and $z(\lambda)=z_1=n-1$. By  Lemma \ref{+1} and Lemma \ref{lamrho-},  
	$$1 -(a_2+\cdots+a_{n-1})>0$$ 
	or $$1 -(a_2+\cdots+a_{n-2}+a_{n})>0.$$

	In the first case, we have $a_2=\cdots=a_{n-1}=0$. Hence
	$\lambda=a_n\omega_{n}-(n-2+a_n)\zeta$ (unitary reduction point). 
	
	In the second case, we have  $a_2=\cdots=a_{n-2}=a_n=0$. Hence
	$\lambda=a_{n-1}\omega_{n-1}-(n-2+a_{n-1})\zeta$ (unitary reduction point).

	\subsection{Case $\g_\R=\mathfrak{e}_{6(-14)}$} By inspection of the Hasse diagram of $\Delta(\p^+)$,
	\begin{align*}
	A_0&= \Delta(\mathfrak{p}^+)_{1}=\{[1,0,0,0,0,0]\},\\ 
	A_1&= \Delta(\mathfrak{p}^+)_{4}=\{ [1, 0, 1, 1, 1, 0],[1, 1, 1, 1, 0, 0]\}.
	\end{align*}
	Fix $\tau=a_2\omega_2+a_3\omega_3+a_4\omega_4+a_5\omega_5+a_6\omega_6$. Then, since  $\beta=[1, 2, 2, 3, 2, 1]$,
	$$
	\lambda_0 =\tau-(\tau+\rho,\beta^\vee) \zeta =\tau-(11+ 2a_2+ 2a_3+ 3a_4+ 2a_5+a_6)\zeta.
	$$
	
	Suppose $\AV(L(\lambda))=\overline{\mathcal{O}_{0}}$ and $z(\lambda)=z_0=11$. By  Lemma \ref{+1} and Lemma \ref{lamrho-},  
	$$1-(2a_2+2a_3+3a_4+2a_5+a_6)>0$$ and hence $a_2=a_3=a_4=a_5=a_6=0$. It follows that $\lambda=0$ (trivial representation). 
	
	\medskip\noindent
	Suppose $\AV(L(\lambda))=\overline{\mathcal{O}_{1}}$ and $z(\lambda)=z_1=8$. By  Lemma \ref{+1} and Lemma \ref{lamrho-},  
	$$
	1-(2a_2+a_3+2a_4+a_5+a_6)>0 \ \mbox{or}\ 
	1-(a_2+ a_3+ 2a_4+ 2a_5+a_6)>0.
	$$
	In either case, $a_2=a_3=a_4=a_5=a_6=0$ and hence $\lambda=-3\zeta$ (1st Wallach representation).

	\subsection{Case $\g_\R=\mathfrak{e}_{7(-25)}$}
	By inspection of the Hasse diagram of $\Delta(\p^+)$,
	\begin{align*}
	A_0&= \Delta(\mathfrak{p}^+)_{1}=\{[0,0,0,0,0,0,1]\}, \\
	A_1&= \Delta(\mathfrak{p}^+)_{5}=\{[0, 0, 1, 1, 1, 1, 1], [0, 1, 0, 1, 1, 1, 1]\},\\
	A_2&=\Delta(\mathfrak{p}^+)_{9}=\{[1, 1, 2, 2, 1, 1, 1],[1, 1, 1, 2, 2, 1, 1],[0, 1, 1, 2, 2, 2, 1]\}.
	\end{align*}
	Fix $\tau=a_1\omega_2+a_2\omega_2+a_3\omega_3+a_4\omega_4+a_5\omega_5+a_6\omega_6$. Then, since $\beta=[2, 2, 3, 4, 3, 2, 1]$,
	$$
	\lambda_0=\tau-(17+2a_1+2a_2+3a_3+4a_4+3a_5+2a_6)\zeta.
	$$
	
	Suppose $\AV(L(\lambda))=\overline{\mathcal{O}_{0}}$ and $z(\lambda)=z_0=17$. By Lemma \ref{+1} and Lemma \ref{lamrho-}, 
	$$1-(2a_1+2a_2+3a_3+4a_4+3a_5+2a_6) >0$$ and hence $a_1=a_2=a_3=a_4=a_5=a_6=0$. It follows that $\lambda=0$ (trivial representation). 
	
	\medskip\noindent
	Suppose $\AV(L(\lambda))=\overline{\mathcal{O}_{1}}$ and $z(\lambda)=z_1=13$. By Lemma \ref{+1} and Lemma \ref{lamrho-}, 
	$$
	1-(2a_1+2a_2+ 2a_3+ 3a_4+ 2a_5+ a_6)>0 \ \mbox{or}\ 
	1-(2a_1+ a_2+ 3a_3+ 3a_4+ 2a_5+ a_6)>0.
	$$
	In either case, $a_1=a_2=a_3=a_4=a_5=a_6=0$ and hence $\lambda=-4\omega_7$ (1st Wallach representation). 
	
	\medskip\noindent
	Suppose $\AV(L(\lambda))=\overline{\mathcal{O}_{2}}$ and $z(\lambda)=z_2=9$. By Lemma \ref{+1} and Lemma \ref{lamrho-},  
	\begin{align*}
	&1-(a_1+ a_2 + a_3 + 2a_4 + 2a_5 +a_6>0,\\
	&  1- (a_1+ a_2+ 2a_3+ 2a_4+ a_5+a_6)>0,\\
	\ \text{or~}&1-(2a_1+ a_2+ 2a_3+ 2a_4+ a_5)>0.
	\end{align*}
	In the first two cases, $a_1=a_2=a_3=a_4=a_5=a_6=0$ and hence $\lambda=-8\zeta$ (2nd Wallach representation).
	In the third case, $a_1=a_2=a_3=a_4=a_5=0$ and $\lambda = a_6\omega_6+(-2a_6 -8)\zeta$ (unitary reduction point).

	\section{Proof of the main theorem:  non-simply-laced cases}
	In this section, we assume that $\Delta$ is not simply-laced.
	
	For  any integer $0\leq k\leq r-1$,  $A_k$ is the antichain in $\Delta(\mathfrak{p}^+)$ such that  $|A_{k}|=[\frac{k}{2}]+1$ and $A_k=\Delta(\p^+)_h$ with $h$ minimal.
	
The proof of Theorem \ref{conj} in non-simply-laced cases is similar to the simply-laced cases. We need Lemma \ref{+1} and the following useful lemma in the computation.

	\begin{lemma}\label{lamrho-2} Let $\tau\in \Lambda^+(\k)$.  If $\lambda_0 = \tau-(\tau+\rho,\beta^\vee)\zeta$ and $\lambda = \lambda_0+z\zeta$, then, for any $\alpha\in \Delta(\p^+)_h$,  
		$$
		(\lambda+\rho,\alpha^\vee) =  z-(\rho,\beta^\vee) +(\rho,\alpha^\vee) - (\tau,\beta^\vee -\alpha^\vee)  \text{~when $\alpha$ is a long root},
		$$
		and 	$$
		(\lambda+\rho,\alpha^\vee) =  2z-2(\rho,\beta^\vee) +(\rho,\alpha^\vee) - (\tau,2\beta^\vee -\alpha^\vee) \text{~when $\alpha$ is a short root}.$$
	\end{lemma}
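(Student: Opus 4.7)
The plan is to mimic the proof of Lemma \ref{lamrho-} via direct expansion, with the essential new input being the value of $(\zeta,\alpha^\vee)$ in the non-simply-laced setting. First I would establish the auxiliary fact that, for any $\alpha\in\Delta(\p^+)$,
$$
(\zeta,\alpha^\vee)=\begin{cases}1,&\text{if $\alpha$ is long,}\\ 2,&\text{if $\alpha$ is short.}\end{cases}
$$
The non-simply-laced Hermitian cases are $\g_{\R}=\sp(n,\R)$ (type $\ssC$) and $\g_{\R}=\so(2,2n-1)$ (type $\ssB$). In both, $\Delta(\k)$ is a sub-root system of corank one, so the defining conditions $\zeta\perp\Delta(\k)$ and $(\zeta,\beta^\vee)=1$ pin down $\zeta$ up to scalar and then fix the scalar. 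For $\sp(n,\R)$ this yields $\zeta=\varepsilon_1+\cdots+\varepsilon_n$ with long noncompact roots $2\varepsilon_i$ (coroot $\varepsilon_i$) and short noncompact roots $\varepsilon_i+\varepsilon_j$ (coroot equal to itself), giving $(\zeta,\alpha^\vee)=1$ and $2$ respectively. For $\so(2,2n-1)$ one finds $\zeta=\varepsilon_1$, with long noncompact roots $\varepsilon_1\pm\varepsilon_j$ and the short noncompact root $\varepsilon_1$ (coroot $2\varepsilon_1$), which again gives $1$ and $2$. Since $\beta$ itself is always long in these types, this is consistent with $(\zeta,\beta^\vee)=1$.

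With the auxiliary identity in hand, the main formulas follow by an algebraic manipulation identical to the one in Lemma \ref{lamrho-}. Substituting $\lambda_0=\tau-(\tau+\rho,\beta^\vee)\zeta$ and $\lambda=\lambda_0+z\zeta$ yields
$$
(\lambda+\rho,\alpha^\vee)=(\tau,\alpha^\vee)-(\tau+\rho,\beta^\vee)(\zeta,\alpha^\vee)+z(\zeta,\alpha^\vee)+(\rho,\alpha^\vee).
$$
When $\alpha$ is long, $(\zeta,\alpha^\vee)=1$, and regrouping $(\tau,\alpha^\vee)-(\tau,\beta^\vee)=-(\tau,\beta^\vee-\alpha^\vee)$ produces $z-(\rho,\beta^\vee)+(\rho,\alpha^\vee)-(\tau,\beta^\vee-\alpha^\vee)$, as required. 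When $\alpha$ is short, $(\zeta,\alpha^\vee)=2$, and the same regrouping (now with a factor of two on the $\zeta$-terms) gives $2z-2(\rho,\beta^\vee)+(\rho,\alpha^\vee)-(\tau,2\beta^\vee-\alpha^\vee)$.

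The only genuine content is the auxiliary identity for $(\zeta,\alpha^\vee)$; the rest is bookkeeping. I expect the main obstacle to be purely notational, namely stating the long/short dichotomy uniformly so that the factor of $2$ appears naturally from $\alpha^\vee=2\alpha/(\alpha,\alpha)$ relative to the long normalization used for $\beta$. Once this is phrased carefully, the two claimed formulas drop out by a single line of computation each.
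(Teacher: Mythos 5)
Your proposal is correct and follows essentially the same route as the paper: expand $(\lambda+\rho,\alpha^\vee)$ using $\lambda_0=\tau-(\tau+\rho,\beta^\vee)\zeta$, then split on whether $\alpha$ is long or short according to the value of $(\zeta,\alpha^\vee)$. The only difference is that the paper simply asserts $(\zeta,\alpha^\vee)=1$ for $\alpha$ long and $(\zeta,\alpha^\vee)=2$ for $\alpha$ short without justification, whereas you supply the short case-by-case verification in types $\ssC$ and $B$; this is a harmless and slightly more self-contained presentation of the same argument.
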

	\begin{proof}
		Let $\alpha\in \Delta(\p^+)_h$. Since $\Delta$ is not simply-laced,  $(\zeta,\alpha^\vee)=1$ if $\alpha$ is a long root and  $(\zeta,\alpha^\vee)=2$ if $\alpha$ is a short root.  $(\rho,\alpha^\vee)=h$. Thus, if  $\alpha$ is a long root, we have
		\begin{align*}
		(\lambda+\rho,\alpha^\vee) &= (\lambda_0+z\zeta+\rho,\alpha^\vee)\\ &=  (\tau,\alpha^\vee) -(\tau+\rho,\beta^\vee)(\zeta,\alpha^\vee) +z(\zeta,\alpha^\vee)+(\rho,\alpha^\vee)\\
		& = (\tau,\alpha^\vee) -(\tau+\rho,\beta^\vee)  +z +(\rho,\alpha^\vee) \\
		& = (\tau,\alpha^\vee-\beta^\vee) -(\rho,\beta^\vee)  +z +(\rho,\alpha^\vee) \\
		&= z-(\rho,\beta^\vee) +(\rho,\alpha^\vee) - (\tau,\beta^\vee -\alpha^\vee).
		\end{align*}
		
		If  $\alpha$ is a short root, we have
		\begin{align*}
		(\lambda+\rho,\alpha^\vee) &= (\lambda_0+z\zeta+\rho,\alpha^\vee)\\ &=  (\tau,\alpha^\vee) -(\tau+\rho,\beta^\vee)(\zeta,\alpha^\vee) +z(\zeta,\alpha^\vee)+(\rho,\alpha^\vee)\\
		& = (\tau,\alpha^\vee) -2(\tau+\rho,\beta^\vee)  +2z +(\rho,\alpha^\vee) \\
		& = (\tau,\alpha^\vee-2\beta^\vee) -2(\rho,\beta^\vee)  +2z +(\rho,\alpha^\vee) \\
		&= 2z-2(\rho,\beta^\vee) +(\rho,\alpha^\vee) - (\tau,2\beta^\vee -\alpha^\vee).
		\end{align*}
	\end{proof}

	\subsection{$\mathfrak{g}_{\R}=\mathfrak{sp}(n,\mathbb{R})$}
	For  $0\leq k\leq r-1=n-1$,
	\begin{align*}
	\text{if $k=2m$ is even}, A_k&=\Delta(\mathfrak{p}^+)_{k+1}=\{2\varepsilon_{n-m},\varepsilon_{n-m-i}+\varepsilon_{n-m+i}\mid 1\leq i\leq m\}\\
	&=\{[\, \underbrace{0,\ldots,0}_{n-m-1-i},\underbrace{1,\ldots,1}_{2i},\underbrace{2,\ldots,2}_{m-i},1\, ]\mid 0\leq i \leq m\},
	\end{align*}
	\begin{align*}
	\text{if $k=2m+1$ is odd}, A_k&=\Delta(\mathfrak{p}^+)_{k+1}=\{\varepsilon_{n-m-1-i}+\varepsilon_{n-m+i}\mid 0\leq i\leq m\}\\
	&=\{[\, \underbrace{0,\ldots,0}_{n-m-2-i},\underbrace{1,\ldots,1}_{1+2i},\underbrace{2,\ldots,2}_{m-i},1\, ]\mid 0\leq i \leq m\}.
	\end{align*}
	Fix $\tau=a_1\omega_1+a_2\omega_2+\cdots +a_{n-1}\omega_{n-1}\in \Lambda^+(\k)$.  Then, since  $\beta=[2, \cdots, 2, 1]$,
	$$
	\lambda_0  =\tau-(\tau+\rho,\beta^{\vee})\zeta= \tau-(n+a_1+a_2+\cdots+a_{n-2}+ a_{n-1})\zeta.
	$$

	Now suppose $\lambda =\lambda_0+z\zeta$ such that  $\AV(L(\lambda))=\overline{\mathcal{O}_{k}}$ with $1\leq k\leq r-1$ and $z= z_k=n-\frac{k}{2}$.
	By Lemma \ref{+1},  $(\lambda +\rho,\alpha^\vee)>0$ for some $\alpha \in A_k$. 
	So if $k=2m$ is even, by Lemma \ref{lamrho-2} we have $$1-(a_1+a_2+\cdots+a_{n-m-1})>0$$
	or $$2-(2a_1+\cdots+2a_{n-m-1-i}+a_{n-m-i}+\cdots+a_{n-1-m+i})>0.$$
	In the first  case, $a_1=a_2=\cdots =a_{n-m-1}=0$ and hence \begin{align*}
	\lambda&=a_{n-m}\omega_{n-m}+\cdots+a_{n-1}\omega_{n-1}-(a_{n-m}+\cdots+a_{n-1})\zeta-\frac{1}{2}\zeta\\
	&=(\underbrace{0,\ldots,0}_{n-m},\underbrace{*,\ldots,*}_{m})-\frac{1}{2}\zeta.
	\end{align*}
	By \cite{EHW} or \cite{EW}, $L(\lambda)$ is unitary.
	
	In the second case, $a_1=a_2=\cdots =a_{n-m-1-i}=0$ and 
	$a_{n-m-i}=\cdots=a_{n-1-m+i}=0$ or at most one of $\{a_{n-m-i},\cdots,a_{n-1-m+i}\}$ equals to $1$ with the rest  queal to $0$. Hence
	\begin{align*}
	\lambda&=a_{n-m-i}\omega_{n-m-i}+\cdots+a_{n-1}\omega_{n-1}-(a_{n-m-i}+\cdots+a_{n-1})\zeta-\frac{1}{2}\zeta\\
	&=(\underbrace{0,\ldots,0}_{\geq n-m-i},\underbrace{-1,\cdots,-1,\overbrace{*,\ldots,*}^{m-i}}_{\leq m+i})-\frac{1}{2}\zeta.
	\end{align*}
	
	By \cite{EHW} or \cite{EW}, $L(\lambda)$ is unitary.

	
	If  $k=2m+1$ is odd, we have
	$$2-(2a_1+\cdots+2a_{n-m-2-i}+a_{n-m-1-i}+\cdots+a_{n-1-m+i})>0,$$
	then the arguments are similar to the above case.
	
	%
	%
	%
	%
	%
	%
	
	\subsection{$\mathfrak{g}_{\R}=\mathfrak{so}(2,2n-1)$}
	For  $0\leq k\leq r-1=2-1=1$,
	\begin{align*}
	A_0&=\Delta(\mathfrak{p}^+)_{1}=\{\varepsilon_{1}-\varepsilon_{2}=[1,0,...,0]\},\\
	A_1&= \Delta(\mathfrak{p}^+)_{n}=\{\varepsilon_{1}= [1,1,\dots,1]\}.
	\end{align*}
	
	Fix $\tau=a_2\omega_2+\cdots +a_{n-1}\omega_{n-1}+a_n\omega_n\in \Lambda^+(\k)$.  Then, since  $\beta=\varepsilon_{1}+\varepsilon_{2}=[1,2, \cdots, 2]$,
	$$
	\lambda_0  =\tau-(\tau+\rho,\beta^{\vee})\zeta= \tau-(2n-2+2a_2+\cdots+2a_{n-1}+a_{n})\zeta.
	$$
	

	%
	%
	%
	%
	%
	
	
	Suppose $\AV(L(\lambda))=\overline{\mathcal{O}_{0}}$ and $z(\lambda)=z_0=2n-2$. By  Lemma \ref{+1} and  Lemma \ref{lamrho-2},  
	$$1 -(2a_2+\cdots+2a_{n-1}+a_{n})>0$$ 
	and hence $a_2=\cdots=a_{n}=0$. It follows that $\lambda=0$ (trivial representation). 
	
	Suppose $\AV(L(\lambda))=\overline{\mathcal{O}_{1}}$ and $z(\lambda)=z_1=n-\frac{1}{2}$. By  Lemma \ref{+1} and  Lemma \ref{lamrho-2},  
	$$2 -(2a_2+\cdots+2a_{n-1}+a_{n})>0$$ 
	and hence $a_2=\cdots=a_{n}=0$ or $a_2=\cdots=a_{n-1}=a_n-1=0$. 
	
	In the first case, it follows that $\lambda=-(n-\frac{3}{2})\zeta$ (1st Wallach representation). 
	
	In the second case, we have 
	$\lambda=\omega_{n}-(n-\frac{1}{2})\zeta$ (unitary reduction point).

	\section{A uniform formula for the Gelfand--Kirillov dimension }
In our previous paper \cite{BH}, we found a uniform formula for the Gelfand--Kirillov dimensions of all unitary highest weight modules. Now we will give a new formula for  the Gelfand--Kirillov dimensions of all  highest weight Harish-Chandra modules. 

%

%
%
%
%
%
%

We recall the definition of $z_k(\lambda_0)$ in the introduction. Then we have the following lemma.

\begin{lemma}
Suppose    $\mathfrak{g}$ is of type $ADE$. For $z=(\lambda+\rho, \beta^{\vee})\in \mathbb{Z}$, we have
 $$z_k(\lambda_0)\leq z \iff  m(\lambda)\leq k.$$
\end{lemma}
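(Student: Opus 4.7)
The plan is to reduce the lemma to a combinatorial dominance statement about antichains in the poset $\Delta(\mathfrak{p}^+)$. In the $ADE$ case every $\alpha \in \Delta(\mathfrak{p}^+)$ satisfies $(\zeta,\alpha^\vee) = 1$, so for each fixed $\alpha$ the map $z \mapsto (\lambda_0 + z\zeta + \rho,\alpha^\vee)$ is strictly increasing in $z$; consequently the set $\{z' \in \mathbb{Z} : \exists\, \alpha \in A_k,\ (\lambda_0 + z'\zeta + \rho,\alpha^\vee) > 0\}$ is upward closed. This yields the chain of equivalences
\[
z_k(\lambda_0) \leq z \iff \exists\,\alpha \in A_k,\ (\lambda+\rho,\alpha^\vee) > 0 \iff A_k \not\subseteq Y_\lambda,
\]
so the lemma becomes the assertion that $A_k \subseteq Y_\lambda \iff m(\lambda) \geq k+1$.

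The forward implication is immediate because $A_k$ is itself an antichain of cardinality $k+1$ contained in $Y_\lambda$. For the converse, suppose $m(\lambda) \geq k+1$. By Lemma~\ref{antichain} there exists an antichain $B \subseteq Y_\lambda$ of cardinality at least $k+1$ all of whose elements lie at a common height $h$ in $\Delta(\mathfrak{p}^+)$. Since $Y_\lambda$ is a lower-order ideal, to conclude $A_k \subseteq Y_\lambda$ it suffices to prove the following structural dominance claim: \emph{every same-height antichain $B$ in $\Delta(\mathfrak{p}^+)$ of cardinality $\geq k+1$ has the property that each $\alpha \in A_k$ lies below some $\beta \in B$.}

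I would verify this claim by a short case analysis in each $ADE$ Hermitian symmetric type. For $\mathfrak{su}(p,q)$, identifying $\Delta(\mathfrak{p}^+)$ with pairs $(i,j) \in [1,p]\times [1,q]$ ordered componentwise (so that height equals $i+j-1$), $A_k$ is the diagonal $\{(i,k+2-i) : 1 \leq i \leq k+1\}$, while any same-height antichain $B$ at height $h$ has the form $\{(a_l,h+1-a_l)\}$ with distinct $a_l$. For a fixed $(i,k+2-i) \in A_k$, the $a$-values in $[1,h]$ that fail to give a dominating pair form $[1,i-1] \cup [h-k+i,h]$, a set of exactly $k$ integers; since $|B| \geq k+1$, pigeonhole produces a valid $\beta \in B$. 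Analogous pigeonhole arguments handle $\mathfrak{so}^*(2n)$ and $\mathfrak{so}(2,2n-2)$, while the exceptional types $\mathfrak{e}_{6(-14)}$ (for which $k \leq 1$) and $\mathfrak{e}_{7(-25)}$ (for which $k \leq 2$) reduce to a finite inspection of the antichains listed in the Appendix. The principal obstacle is simply the bookkeeping of this case analysis; a conceptual uniform proof of the dominance claim would be desirable but is not strictly required for the lemma.
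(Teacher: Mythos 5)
Your proposal is correct and follows essentially the same route as the paper: use the monotonicity of $z\mapsto(\lambda_0+z\zeta+\rho,\alpha^\vee)$ (which holds since $(\zeta,\alpha^\vee)=1$ in type $ADE$) to turn the definition of $z_k(\lambda_0)$ into the equivalence $z_k(\lambda_0)\leq z \iff A_k\not\subseteq Y_\lambda$, then reduce the lemma to the combinatorial statement $A_k\subseteq Y_\lambda \iff m(\lambda)\geq k+1$, which the paper dispenses with as an ``easy to verify (case-by-case)'' observation. Where you go further is in supplying structure to that case check: the forward implication follows trivially because $A_k$ is itself an antichain of cardinality $k+1$; for the converse you invoke Lemma~\ref{antichain} to obtain a same-height antichain $B\subseteq Y_\lambda$ of size $\geq k+1$, then use the lower-order-ideal property of $Y_\lambda$ together with a dominance claim (every element of $A_k$ lies below some element of $B$) established by pigeonhole in each Hermitian $ADE$ type. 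This is a genuinely more explicit justification of the step the paper leaves implicit; the pigeonhole bound you compute for $\mathfrak{su}(p,q)$ (exactly $k$ bad $a$-values, so $|B|\geq k+1$ forces a good one) is correct, and the remaining types are indeed small enough for direct inspection. One minor point worth noting: your dominance claim tacitly requires the height $h$ of $B$ to satisfy $h\geq k+1$, but this is forced by $|B|\geq k+1$ in all the relevant posets, so no gap results.
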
	
\begin{proof}
	 	It is easy to verify (case-by-case) that
	$$m(\lambda)\geq k+1 \iff A_k \subset \{\alpha \in  \Delta(\mathfrak{p}^+) \mid (\lambda+\rho, \alpha^{\vee}
	) \in \mathbb{Z}_{\leq 0}\}.$$

	 For $z\in \mathbb{Z}$, we have
	\begin{align*}
	z_k(\lambda_0)\leq z &\iff  \exists~ \alpha \in A_{k} \text{~such~that~} (\lambda_0+ z\zeta+\rho, \alpha^{\vee})>0\\
	&\iff A_{k} \nsubseteq \{\alpha \in  
	\Delta(\mathfrak{p}^+)\mid (\lambda+\rho,\alpha^{\vee})\leq 0\}\\
	&\iff m(\lambda)<k+1=|A_k|\\
	&\iff  m(\lambda)\leq k.
	\end{align*}
\end{proof}

We recall the main theorem in \cite{BHXZ}.
\begin{prop}[\cite{BHXZ}]
	Suppose $L(\lambda)$  is a highest weight Harish-Chandra module with highest weight $\lambda$ and $\AV(L(\lambda))=\overline{\mathcal{O}_{k(\lambda)}}$. Let 
$m=\operatorname{width}(Y_\lambda)$. Then $k(\lambda)$ is given as follows.
\begin{enumerate}
 \item[(a)] If $\gD$ is simply laced and $\lambda$ is integral, then $k(\lambda)=m$.
 \item[(b)] If $\gD$ is non-simply laced and $\lambda$ is integral, then
 \begin{equation*}
  k(\lambda)=\begin{cases} 2m, &\text{if }m< \frac{r+1}{2}  \\
                       r, &\text{if }m=\frac{r+1}{2}.
         \end{cases}
 \end{equation*}
\item[(c)] If $\gD$ is non-simply laced and $\lambda$ is half-integral, then
 \begin{equation*}
  k(\lambda)=\begin{cases} 2m+1, &\text{if }m< \frac{r}{2}  \\
                       r, &\text{if }m=\frac{r}{2}.
         \end{cases}
 \end{equation*}
\item[(d)] In all other cases $k(\lambda)=r$.
\end{enumerate}

\end{prop}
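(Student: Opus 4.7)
The plan is to determine $k(\lambda)$ via the identity $\gk L(\lambda) = \dim \overline{\mathcal{O}_{k(\lambda)}}$. Since the $K$-orbits form a linear chain $\{0\} = \overline{\mathcal{O}_0} \subset \cdots \subset \overline{\mathcal{O}_r} = \p^+$ with known dimensions, it suffices to compute $\gk L(\lambda)$ and read off the orbit index.

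For case (d), one argues that when $\lambda$ is neither integral nor (in the non-simply-laced case) half-integral, the integral root system $\Delta_\lambda$ is too sparse for cancellations in the Jantzen-sum character of $L(\lambda)$ to lower its leading behavior, so $\AV(L(\lambda))$ must fill $\p^+$ and $k(\lambda) = r$.

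For cases (a)--(c), the strategy is to use Kazhdan--Lusztig--Vogan polynomials in parabolic category $\O^\q$ to express $\ch L(\lambda)$ as a signed sum indexed by a sub-Weyl-group orbit controlled by $Y_\lambda$. By Lemma~\ref{antichain}, the width $m$ of $Y_\lambda$ is realized by an antichain of roots all at a fixed height. In the simply-laced case, each such root contributes an independent noncompact direction in which $\operatorname{gr} L(\lambda)$ vanishes, so the associated variety is supported exactly on $\overline{\mathcal{O}_m}$, giving $k(\lambda) = m$.

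The main obstacle is cases (b) and (c). In non-simply-laced types, the noncompact roots come in both long and short varieties, effectively doubling the ``rank contribution'' of each antichain element: a width-$m$ antichain now corresponds to orbit index $2m$ in the integral case and $2m+1$ in the half-integral case. One must also handle boundary saturation, where $m$ reaches $\tfrac{r+1}{2}$ (integral) or $\tfrac{r}{2}$ (half-integral) and the chain of orbits is exhausted before the doubled formula applies, forcing $k(\lambda) = r$. The complete proof would require a case-by-case verification across the seven Hermitian pairs, using the tableau-rank parametrization of $\overline{\mathcal{O}_k}$ (matrix rank for $\su(p,q)$, symmetric rank for $\sp(n,\R)$, skew rank for $\so^*(2n)$, plus the exceptional types) together with the antichain structure of $\Delta(\p^+)$ recorded in the Appendix.
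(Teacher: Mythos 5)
This Proposition is cited in the paper from \cite{BHXZ} with no proof supplied, so there is no in-paper argument to compare against; I can only assess your sketch on its own terms. The overall strategy---reduce $k(\lambda)$ to a GK-dimension computation and match against the known orbit dimensions $\dim\overline{\mathcal{O}_k}=kz_{k-1}$---is the right framing, but each of the load-bearing steps in your outline is a heuristic rather than an argument.

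For cases (a)--(c), the claim that each element of a width-$m$ antichain of $Y_\lambda$ ``contributes an independent noncompact direction in which $\operatorname{gr}L(\lambda)$ vanishes'' is a picture without a mechanism: $\AV(L(\lambda))$ is the zero set of $\operatorname{Ann}_{S(\g)}(\operatorname{gr}L(\lambda))$, and connecting the combinatorics of $Y_\lambda$ to the radical of that ideal is precisely the hard content of \cite{BHXZ}, requiring KL-type character formulas, the Lusztig $a$-function, and a case-by-case analysis of the posets $\Delta(\p^+)$---none of which your sketch carries out. Your ``doubling'' in (b)--(c) is likewise asserted without a mechanism; what actually happens is that for integral (resp.\ half-integral) $\lambda$ in non-simply-laced type only certain heights of $\Delta(\p^+)$ contribute to $\Delta_\lambda\cap\Delta(\p^+)$, and this interacts with the orbit chain to give the shifts $2m$ and $2m+1$, together with the boundary saturation to $r$. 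Finally, case (d) should not be framed as ``sparseness blocking cancellations'' in a Jantzen sum: if $z=(\lambda+\rho,\beta^\vee)\notin\Z$ (simply-laced) or $z\notin\tfrac12\Z$ (non-simply-laced), then $\Delta(\p^+)\cap\Delta_\lambda=\varnothing$, so by the reducibility criterion of \cite{EHW} one has $N(\lambda)=L(\lambda)$; since $N(\lambda)$ is free of finite rank over $U(\p^-)$, $\gk L(\lambda)=\dim\p^+$, forcing $k(\lambda)=r$ directly. In short, the skeleton is plausible, but the proof content is missing.
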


In the following, we give the new formula for the Gelfand--Kirillov dimensions of all  highest weight Harish-Chandra modules. 
	
	\begin{thm}\label{main}
		Suppose $\lambda=\lambda_0+z\zeta$ is a reduction point. Then
	\begin{itemize}

		\item[(a)]  If  $\mathfrak{g}$ is of type $ADE$, then $z\in \mathbb{Z}$ and
		$$\gk L(\lambda)=
		\begin{cases}
		rz_{r-1}, & \mbox{if $z<z_{r-1}(\lambda_0)$}\\
		kz_{k-1}, & \mbox{if $z_{k}(\lambda_0)\leq z < z_{k-1}(\lambda_0)$, where $1\leq k\leq r-1$}  \\
		0, &  \mbox{if $z_{0}(\lambda_0) \leq z\in \mathbb{Z}$}.	\end{cases}
		$$
		\item[(b)]
		If  $\mathfrak{g}$ is of type $BC$, then $z\in \mathbb{Z}$ or $z\in \frac{1}{2}+\mathbb{Z}$ and
  		\begin{align*}
		 \gk L(\lambda)=
		\begin{cases}
		rz_{r-1}, & \mbox{if $z<z_{r-1}(\lambda_0)$}\\
		kz_{k-1}, & \mbox{if $z_{k}(\lambda_0)\leq z < z_{k-2}(\lambda_0)$, where $2\leq k\leq r-1$ and}  \\
		& \mbox{\quad either $z\in \mathbb{Z}$ and  $k$ is even  or $z\in \frac{1}{2}+\mathbb{Z}$ and $k$ is odd}\\
		z_0, & \mbox{if $z_{1}(\lambda_0)  \leq z\in \frac{1}{2}+\mathbb{Z}$}\\
		0, &  \mbox{if $z_{0}(\lambda_0)  \leq z\in \mathbb{Z}$}.\\
		\end{cases}
		\end{align*}

		\end{itemize}

\end{thm}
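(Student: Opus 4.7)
The plan is to combine three ingredients already developed in the paper: (i) the identity $\gk L(\lambda) = \dim \AV(L(\lambda)) = \dim \overline{\mathcal{O}_{k(\lambda)}}$, together with the explicit formula $\dim \overline{\mathcal{O}_k} = k z_{k-1}$ extracted from Proposition~\ref{C: dimYk}; (ii) the classification from \cite{BHXZ} of $k(\lambda)$ in terms of the width $m(\lambda)$ of the diagram $Y_\lambda$; and (iii) the lemma just proved, which translates $z_k(\lambda_0) \leq z$ into $m(\lambda) \leq k$ in the ADE case (with an analogous statement needed in the BC case). The proof then reduces to pinning down, for each range of $z$, the exact value of $m(\lambda)$, which by \cite{BHXZ} determines $k(\lambda)$, hence the GK dimension.

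For part (a), the ADE case, I would argue as follows. If $z_k(\lambda_0) \leq z < z_{k-1}(\lambda_0)$ with $1 \leq k \leq r-1$, the lemma gives simultaneously $m(\lambda) \leq k$ and $m(\lambda) > k-1$, forcing $m(\lambda) = k$. Since $z \in \Z$ implies $\lambda$ integral, case (a) of the BHXZ proposition yields $k(\lambda) = m(\lambda) = k$, hence $\gk L(\lambda) = k z_{k-1}$. The two boundary ranges $z < z_{r-1}(\lambda_0)$ and $z \geq z_0(\lambda_0)$ handle analogously, producing $m(\lambda) = r$ (yielding $k(\lambda)=r$ and $\gk L(\lambda) = r z_{r-1} = \dim \mathfrak{p}^+$) and $m(\lambda) = 0$ (yielding $\gk L(\lambda) = 0$), respectively.

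For part (b), the BC case, the strategy is identical but requires first establishing a parity-sensitive analogue of the lemma: for $k = 2\ell$ even and $z \in \Z$, $z_k(\lambda_0) \leq z$ iff $m(\lambda) \leq \ell$; and for $k = 2\ell+1$ odd and $z \in \tfrac{1}{2}+\Z$, $z_k(\lambda_0) \leq z$ iff $m(\lambda) \leq \ell$. These are proved exactly as in the ADE lemma, by the case-by-case inspection of $A_k$ already carried out in the $\sp(n,\R)$ and $\mathfrak{so}(2,2n-1)$ subsections. Combined with parts (b) and (c) of the BHXZ proposition, which give $k(\lambda) = 2m$ or $2m+1$ according to integrality or half-integrality of $\lambda$, this identifies $k(\lambda) = k$ throughout the range $z_k(\lambda_0) \leq z < z_{k-2}(\lambda_0)$, where the gap of two reflects that consecutive thresholds alternate between $\Z$ and $\tfrac{1}{2}+\Z$. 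The formula $\gk L(\lambda) = k z_{k-1}$ then follows. The two edge ranges are handled similarly: $z_1(\lambda_0) \leq z \in \tfrac{1}{2}+\Z$ forces $m(\lambda) = 0$ with $\lambda$ half-integral, so by part (c) $k(\lambda) = 1$ and $\gk L(\lambda) = z_0$; while $z_0(\lambda_0) \leq z \in \Z$ forces $m(\lambda) = 0$ with $\lambda$ integral, so $k(\lambda) = 0$ and $\gk L(\lambda) = 0$.

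The main obstacle is the bookkeeping of parities in the BC case, specifically verifying that the natural threshold following $z_k(\lambda_0)$ is $z_{k-2}(\lambda_0)$ rather than $z_{k-1}(\lambda_0)$. This is forced by the definition of $z_k(\lambda_0)$, which restricts to $\Z$ or $\tfrac{1}{2}+\Z$ according to the parity of $k$, so that the relevant thresholds for a given $z$ lie two steps apart. A secondary check, requiring only inspection of the root data in tables, is that the parity-indexed antichains $A_k$ actually satisfy the width-containment criterion used in the lemma. Once these combinatorial points are confirmed, the remainder of the proof is a direct translation.
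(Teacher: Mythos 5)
Your proposal is correct and follows essentially the same route as the paper's proof: both convert the threshold condition $z_k(\lambda_0)\leq z$ into a width bound $m(\lambda)\leq k$ (or its parity-split analogue in type $BC$), then invoke the \cite{BHXZ} classification of $k(\lambda)$ in terms of $m(\lambda)$ together with the orbit-dimension formula. The paper's actual proof is terser (it states the chain of equivalences and then appeals to BHXZ without spelling out the sandwich argument $z_k(\lambda_0)\leq z<z_{k-1}(\lambda_0)\Rightarrow m(\lambda)=k$), but the logical content matches yours step for step, including the special handling of the half-integral edge case $z_1(\lambda_0)\leq z$, where finiteness of $L(\lambda)$ must be ruled out to conclude $k(\lambda)=1$ rather than $0$.
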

	
	\begin{proof}
		In the following, suppose $0\leq k\leq r-1$.
		
	(a)	In type $ADE$, if $\lambda=\lambda_0+z\zeta$ is a reduction point, then $z\in \mathbb{Z}$  by \cite{EHW}.

		Thus,  we have
\begin{align*}
z_k(\lambda_0)\leq z &\iff  \exists~ \alpha \in A_{k}: (\lambda_0+ z\zeta+\rho, \alpha^{\vee})>0\\
&\iff A_{k} \nsubseteq \{\alpha \in  
\Delta(\mathfrak{p}^+)\mid (\lambda+\rho,\alpha^{\vee})\leq 0\}\\
&\iff m(\lambda)<k+1=|A_k|\\
&\iff  m(\lambda)\leq k.
\end{align*}

		
		The formula then follows from our main theorem (in type $ADE$) in \cite{BHXZ}. 
		
	(b)	In type $BC$, if $\lambda=\lambda_0+z\zeta$ is a reduction point, then $z\in \frac{1}{2}\mathbb{Z}$ by \cite{EHW}.
		
		When  $\lambda=\lambda_0+z\zeta$ is integral, then $z\in \mathbb{Z}$. Thus,  when $k=2m$ is even, we have
		\begin{align*}
	z_k(\lambda_0)\leq z &\iff  \exists~ \alpha \in A_{k}: (\lambda_0+ z\zeta+\rho, \alpha^{\vee})>0\\
	&\iff A_{k} \nsubseteq \{\alpha \in  
	\Delta(\mathfrak{p}^+)\mid (\lambda+\rho,\alpha^{\vee})\leq 0\}\\
	&\iff m(\lambda)<m+1=|A_k|\\
	&\iff  m(\lambda)\leq m\\
	&\iff  2m(\lambda)\leq 2m=k.
	\end{align*}	
	
		When  $\lambda=\lambda_0+z\zeta$ is half-integral, then $z\in \frac{1}{2}+\mathbb{Z}$. Thus,  when $k=2m+1$ is odd, we have
	\begin{align*}
	z_k(\lambda_0)\leq z &\iff  \exists~ \alpha \in A_{k}: (\lambda_0+ z\zeta+\rho, \alpha^{\vee})>0\\
	&\iff  A_{k} \nsubseteq \{\alpha \in  
	\Delta(\mathfrak{p}^+)\mid (\lambda+\rho,\alpha^{\vee})\leq 0\}\\
	&\iff  m(\lambda)< m+1=|A_k|\\
	&\iff  m(\lambda)\leq m\\
	&\iff   2m(\lambda)+1\leq  2m+1=k.
	\end{align*}
	
In particular, $	z_1(\lambda_0)\leq z \iff  2m(\lambda)+1\leq  2m+1=1$. When $\lambda=\lambda_0+z\zeta$ is half-integral, we also know that 	$L(\lambda)$ is not finite-dimensional. Thus $\AV(L(\lambda)) =\overline{\mathcal{O}_{k(\lambda)}}$ with $k(\lambda)=2m(\lambda)+1\geq 1.$

	The formula then follows from our main theorem (in type $BC$) in \cite{BHXZ}. 		
		
	\end{proof} 
The following result was firstly proved in \cite{BH}. Now we give a new proof.
\begin{cor}
    Suppose $L(\lambda)$ is a unitary highest weight Harish-Chandra module with highest weight $\lambda$. We denote $z=z(\lambda)=(\lambda+\rho,\beta^{\vee})$, then
		\begin{align*}\gk L(\lambda)=
		\begin{cases}
		rz_{r-1}, & \mbox{if  $z<z_{r-1}$}\\
		kz_{k-1}, & \mbox{if  $z=z_{k}=(\rho,\beta^{\vee})-kc, 1\leq k\leq r-1$}\\
		0, & \mbox{if $z=z_{0}=(\rho,\beta^{\vee})$}.
		\end{cases}
		\end{align*}
\end{cor}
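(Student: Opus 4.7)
The plan is to derive this corollary directly from Theorem \ref{main}, using Theorem \ref{conj} as the bridge between the unitarity hypothesis and the bracket condition $z_k(\lambda_0)\leq z<z_{k-1}(\lambda_0)$ (or $z_{k-2}(\lambda_0)$ in type $BC$) that triggers the correct case of Theorem \ref{main}. Recall from Enright--Howe--Wallach that for fixed $\lambda_0$ the unitary set decomposes into a continuous ray $\{z<z_{r-1}\}$ together with a finite set of reduction points, so in all cases $\lambda$ is a reduction point and Theorem \ref{main} is applicable.

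In the discrete regime $z=z_k$ with $0\leq k\leq r-1$, Theorem \ref{conj} gives $\AV(L(\lambda))=\overline{\O_k}$. The main theorem of \cite{BHXZ} (recalled just before Theorem \ref{main}) then converts this into the width equality $m(\lambda)=k$ in type $ADE$, and into the parity-adjusted analogues $2m(\lambda)=k$ or $2m(\lambda)+1=k$ in type $BC$. Combining this with the lemma immediately preceding Theorem \ref{main} (and its $BC$-analogue embedded in the proof of Theorem \ref{main}(b)), one obtains $z_k(\lambda_0)\leq z<z_{k-1}(\lambda_0)$ in type $ADE$ and $z_k(\lambda_0)\leq z<z_{k-2}(\lambda_0)$ in type $BC$: indeed $m(\lambda)=k$ gives $z_k(\lambda_0)\leq z$, while $m(\lambda)\not\leq k-1$ (respectively $k-2$) gives the strict upper bound. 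Theorem \ref{main} then returns $\gk L(\lambda)=kz_{k-1}$, with the convention that the product is $0$ when $k=0$.

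In the continuous regime $z<z_{r-1}$, I would first rule out $\AV(L(\lambda))=\overline{\O_k}$ for any $k\leq r-1$: such an equality would force $z=z_k\geq z_{r-1}$ by Theorem \ref{conj}, contradicting $z<z_{r-1}$. Hence $\AV(L(\lambda))=\overline{\O_r}$, the width $m(\lambda)$ attains its maximal value, the same lemma yields $z<z_{r-1}(\lambda_0)$, and Theorem \ref{main} produces $\gk L(\lambda)=rz_{r-1}$.

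The main obstacle is the bookkeeping in type $BC$, where the gap of $2$ (rather than $1$) between consecutive interval endpoints in Theorem \ref{main}(b), the parity and half-integrality split of $z$, and the piecewise correspondence between $k(\lambda)$ and $m(\lambda)$ in \cite{BHXZ} must all be reconciled with the reduction points $z_k$ prescribed by Theorem \ref{conj}. Once this is carried out type by type (with particular care for the boundary $k=1$ case, which feeds into the separately-stated line $z_1(\lambda_0)\leq z\in\tfrac12+\mathbb{Z}$ of Theorem \ref{main}(b) and contributes the term $z_0$), the corollary follows as a mechanical application of Theorems \ref{conj} and \ref{main}.
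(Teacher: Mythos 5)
Your proposal takes a genuinely different and noticeably more roundabout route than the paper. The paper's proof of this corollary never touches Theorem~\ref{main}: it invokes Yamashita's theorem that $\AV(L(\lambda))=\overline{\mathcal{O}_{k(\lambda)}}$ for some $0\le k(\lambda)\le r$, applies Theorem~\ref{conj} to pin down $k(\lambda)$ (in the discrete case $z=z_k$ it forces $k(\lambda)=k$; in the continuous case $z<z_{r-1}$ it rules out $k(\lambda)\le r-1$ by contradiction), and then simply reads off $\gk L(\lambda)=\dim\overline{\mathcal{O}_{k(\lambda)}}=k(\lambda)\,z_{k(\lambda)-1}$, with the convention $\dim\overline{\mathcal{O}_r}=rz_{r-1}$. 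There is no need to pass through the width $m(\lambda)$, the BHXZ dictionary, or the quantities $z_k(\lambda_0)$; the orbit-dimension formula does all the work. Your route through Theorem~\ref{main} is more machinery for the same payoff.

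More importantly, there is a real gap in your argument. You write that ``in all cases $\lambda$ is a reduction point and Theorem~\ref{main} is applicable.'' This is false for the continuous part of the unitary spectrum: for $z$ strictly below the first reduction point $A(\lambda_0)$, the module $L(\lambda)=N(\lambda)$ is irreducible (not a reduction point), and Theorem~\ref{main} as stated does not apply. In that regime one must argue separately that $\gk N(\lambda)=\dim\mathfrak{p}^+=rz_{r-1}$, which happens to agree with the claimed answer, but your proof as written cannot produce it. You need either to split the continuous case into ``reduction point'' and ``non-reduction point'' and handle the latter directly, or to abandon the detour through Theorem~\ref{main} altogether in favor of the orbit-dimension argument. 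A smaller looseness: ``Theorem~\ref{conj} gives $\AV(L(\lambda))=\overline{\O_k}$'' overstates what the theorem says; it is conditional on $\AV\subsetneq\mathfrak{p}^+$, so you must also exclude $\AV(L(\lambda))=\overline{\O_r}$ when $z=z_k$, $k\le r-1$ (the paper's own write-up is also terse on this point, but you should at least flag it).
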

\begin{proof}
First we suppose that $L(\lambda)$ is a unitary highest weight Harish-Chandra module and $z=(\lambda+\rho,\beta^{\vee})=z_k$ for some $0\leq k\leq r-1$.
    
From Yamashita \cite{Ya-94} we have $\AV(L(\lambda))=\overline{\mathcal{O}_{k(\lambda)}}$ for some $0\leq k(\lambda)\leq r$. From Theorem \ref{conj}, we will have $z=(\lambda+\rho,\beta^{\vee})=z_{k(\lambda)}$ since $L(\lambda)$ is unitarizable. Thus we must have $z=z_k=z_{k(\lambda)}$, which implies that $k(\lambda)=k$. So we must have $\gk L(\lambda)=\dim \mathcal{O}_{k}=kz_{k-1}$.

Now we suppose that $L(\lambda)$ is a unitary highest weight Harish-Chandra module and $z=(\lambda+\rho,\beta^{\vee})<z_{r-1}$. From Yamashita \cite{Ya-94} we still have $\AV(L(\lambda))=\overline{\mathcal{O}_{k(\lambda)}}$ for some $0\leq k(\lambda)\leq r$. If $k(\lambda)\leq r-1$, by Theorem \ref{conj} we will have $z=(\lambda+\rho,\beta^{\vee})=z_{k(\lambda)}$ since $L(\lambda)$ is unitarizable. From our assumption, we will have $z=z_{k(\lambda)}<z_{r-1}$,
which implies that $$(\rho,\beta^{\vee})-k(\lambda)c<(\rho,\beta^{\vee})-(r-1)c\Rightarrow r-1<k(\lambda)\leq r-1.$$
This is a contradiction! So we must have $k(\lambda)=r$  and $\gk L(\lambda)=\dim \mathcal{O}_{r}=rz_{r-1}$.
\end{proof}

\begin{ex}
Let $\mathfrak{g}_\R=\mathfrak{su}(4,3)$ and let $L(\lambda)$ be a highest weight Harish-Chandra module with highest weight $\lambda=\lambda_0+z\zeta$. Here $\lambda_0=(0,0,0,-20,8,6,6)$, $\zeta=(\frac{3}{7},\frac{3}{7},\frac{3}{7},\frac{3}{7},-\frac{4}{7},-\frac{4}{7},-\frac{4}{7})$ and $\rho=(3,2,1,0,-1,-2,-3)$.
From \cite{EHW}, we know the unitary reduction points correspond to $z=3$ and $4$. 	For  $0\leq k\leq r-1=\min\{p,q\}-1=2$, we know
$$
A_k=\Delta(\p^+)_{k+1}=\{\varepsilon_{p-k+i} -\varepsilon_{p+i+1} = [\, \underbrace{0,\ldots,0}_{p-k+i-1},\underbrace{1,\ldots,1}_{k+1} ,\underbrace{0,\ldots,0}_{q-i-1}\, ] \mid 0\leq i \leq k\}.
$$
So $A_0=\{\varepsilon_{4} -\varepsilon_{5}\}$, $A_1=\{\varepsilon_{3} -\varepsilon_{5},\varepsilon_{4} -\varepsilon_{6}\}$, $A_2=\{\varepsilon_{2} -\varepsilon_{5},\varepsilon_{3} -\varepsilon_{6},\varepsilon_{4} -\varepsilon_{7}\}$.

We write $\lambda_0+ z\zeta+\rho=(3+\frac{3}{7}z,2+\frac{3}{7}z,1+\frac{3}{7}z,-20+\frac{3}{7}z,7-\frac{4}{7}z,4-\frac{4}{7}z,3-\frac{4}{7}z)$.
Thus we have $z_0(\lambda_0)=28$, $z_1(\lambda_0)=7$, and $z_2(\lambda_0)=4$.
 So 	\begin{align}
 \gk L(\lambda)&=
 \begin{cases}
 3z_{2}, & \mbox{if $z<z_{2}(\lambda_0)=4$}\\
 kz_{k-1}, & \mbox{if $z_{k}(\lambda_0)\leq z < z_{k-1}(\lambda_0)$, where $1\leq k\leq 2$}  \\
 0, &  \mbox{if $z_{0}(\lambda_0)=28 \leq z\in \mathbb{Z}$}.\\
 \end{cases}\\
 &=\begin{cases}
 12, & \mbox{if $z<z_{2}(\lambda_0)=4$}\\
 10, & \mbox{if $z_{2}(\lambda_0)=4\leq z < z_{1}(\lambda_0)=7$}  \\
 6, & \mbox{if $z_{1}(\lambda_0)=7\leq z < z_{0}(\lambda_0)=28$}  \\
 0, &  \mbox{if $z_{0}(\lambda_0)=28 \leq z\in \mathbb{Z}$}.\\
 \end{cases}\label{Gk}
 \end{align}
For this given  $\lambda_0$, when $\AV(L(\lambda))=\overline{\O_k}$ with $0\leq k\leq 2$, from  Theorem \ref{conj}, we have  
$$
L(\lambda)
\text{ ~is unitarizable if and only if ~} (\lambda+\rho,\beta^\vee)= z_k=n-1-k=6-k.
$$
So from the above equation (\ref{Gk}), we have $$
L(\lambda)
\text{ ~is unitarizable if and only if ~}(\lambda+\rho,\beta^\vee)=z=z_2=4.
$$
Note that $z=3$ is a unitary reduction point with $\AV(L(\lambda))=\overline{\O_3}$, which is not included in our Theorem \ref{conj}.
\end{ex}

\begin{ex}
	Let $\mathfrak{g}_\R=\mathfrak{sp}(6,\mathbb{R})$ and let  $L(\lambda)$ be a highest weight Harish-Chandra module with  highest weight $\lambda=\lambda_0+z\zeta$. Here $\lambda_0=(-6,-6,-6,-6,-10,-15)$, $\zeta=(1,1,1,1,1,1)$ and $\rho=(6,5,4,3,2,1)$.
	From \cite{EHW}, we know the unitary reduction points correspond to $z=2.5, 3, 3.5$ and $4$. 	
	For  $0\leq k\leq r-1=n-1=5$, we know
	\begin{align*}
	\text{if $k=2m$ is even}, A_k&=\Delta(\mathfrak{p}^+)_{k+1}=\{2\varepsilon_{n-m},\varepsilon_{n-m-i}+\varepsilon_{n-m+i}\mid 1\leq i\leq m\}\\
	&=\{[\, \underbrace{0,\ldots,0}_{n-m-1-i},\underbrace{1,\ldots,1}_{2i},\underbrace{2,\ldots,2}_{m-i},1\, ]\mid 0\leq i \leq m\},
	\end{align*}
	\begin{align*}
	\text{if $k=2m+1$ is odd}, A_k&=\Delta(\mathfrak{p}^+)_{k+1}=\{\varepsilon_{n-m-1-i}+\varepsilon_{n-m+i}\mid 0\leq i\leq m\}\\
	&=\{[\, \underbrace{0,\ldots,0}_{n-m-2-i},\underbrace{1,\ldots,1}_{1+2i},\underbrace{2,\ldots,2}_{m-i},1\, ]\mid 0\leq i \leq m\}.
	\end{align*}
	So $A_0=\{2\varepsilon_{6} \}$, $A_2=\{2\varepsilon_{5},\varepsilon_{4} +\varepsilon_{6}\}$, $A_4=\{2\varepsilon_{4},\varepsilon_{3} +\varepsilon_{5},\varepsilon_{2} +\varepsilon_{6}\}$, $A_1=\{\varepsilon_{5}+\varepsilon_{6} \}$, $A_3=\{\varepsilon_{4}+\varepsilon_{5},\varepsilon_{3} +\varepsilon_{6}\}$,   $A_5=\{\varepsilon_{3}+\varepsilon_{4},\varepsilon_{2} +\varepsilon_{5},\varepsilon_{1} +\varepsilon_{6}\}$.
	
	We write $\lambda_0+ z\zeta+\rho=(z,z-1,z-2,z-3,z-8,z-14)$.
	Thus we have $z_0(\lambda_0)=15$,  $z_2(\lambda_0)=9$, $z_4(\lambda_0)=4$, $z_1(\lambda_0)=11.5$, $z_3(\lambda_0)=6.5$, $z_5(\lambda_0)=3.5$.
	So 	\begin{align}
	\gk L(\lambda)&=
	\begin{cases}
	6z_{5}, & \mbox{if $z<z_{5}(\lambda_0)=3.5$}\\
	kz_{k-1}, & \mbox{if $z_{k}(\lambda_0)\leq z < z_{k-2}(\lambda_0)$, where $2\leq k\leq 5$ and}  \\
	& \mbox{\quad either $z\in \mathbb{Z}$ and  $k$ is even  or $z\in \frac{1}{2}+\mathbb{Z}$ and $k$ is odd}\\
	6, & \mbox{if $z_{1}(\lambda_0)=11.5  \leq z\in \frac{1}{2}+\mathbb{Z}$}\\
	0, &  \mbox{if $z_{0}(\lambda_0)=15  \leq z\in \mathbb{Z}$}.\\
	\end{cases}\\
	&=\begin{cases}
	21, & \mbox{if $z<z_{5}(\lambda_0)=3.5$}\\
	20, &\mbox{if $z_{5}(\lambda_0)=3.5\leq z < z_{3}(\lambda_0)=6.5$, where $z\in \frac{1}{2}+\mathbb{Z}$}  \\
	18, & \mbox{if $z_{4}(\lambda_0)=4\leq z < z_{2}(\lambda_0)=9$, where $z\in \mathbb{Z}$}  \\
	15, & \mbox{if $z_{3}(\lambda_0)=6.5\leq z < z_{1}(\lambda_0)=11.5$, where $z\in \frac{1}{2}+\mathbb{Z}$}  \\
	11, & \mbox{if $z_{2}(\lambda_0)=9\leq z < z_{0}(\lambda_0)=15$, where $z\in \mathbb{Z}$}  \\
	6, & \mbox{if $z_{1}(\lambda_0)=11.5  \leq z\in \frac{1}{2}+\mathbb{Z}$}\\
	0, &  \mbox{if $z_{0}(\lambda_0)=15  \leq z\in \mathbb{Z}$}.\\
	\end{cases}\label{GKc}
	\end{align}
For this given  $\lambda_0$, when $\AV(L(\lambda))=\overline{\O_k}$ with $0\leq k\leq 5$, from Theorem \ref{conj}, we have  
$$L(\lambda)
\text{ ~is unitarizable if and only if ~}(\lambda+\rho,\beta^\vee)=z= z_k=6-\frac{k}{2}.
$$
So from the above equation (\ref{GKc}), we have $$
L(\lambda)
\text{ ~is unitarizable if and only if ~} (\lambda+\rho,\beta^\vee)=z=z_4=4, \text{or~} z=z_5=3.5.
$$
Note that $z=3$ and $z=2.5$  are two  unitary reduction points with $\AV(L(\lambda))=\overline{\O_6}$, which are not included in our Theorem \ref{conj}.

\end{ex}

	\subsection*{Acknowledgments}
	
	Z. Bai was supported  by the National Natural Science Foundation of 
	China (No. 12171344).


\section{Appendix}
The diagrams of $\Delta{(\mathfrak{p}^{+})}$. 

$\mathfrak{g}_{\R}=\mathfrak{su}(p,q)$:
\begin{center}
\hspace{-6pc}\
\begin{pspicture}(-2,0)(2,4)
\psset{linecolor=black}
\cnode*(0,0){.07}{a0}
\cnode*(-.5,.5){.07}{a1}
\cnode*(.5,.5){.07}{a2}
\cnode*(-1,1){.07}{a3}
\cnode*(0,1){.07}{a4}
\cnode*(1,1){.07}{a5}
\cnode*(-1.5,1.5){.07}{a6}
\cnode*(-.5,1.5){.07}{a7}
\cnode*(.5,1.5){.07}{a8}
\cnode*(1.5,1.5){.07}{a9}
\cnode*(-2,2){.07}{a10}
\cnode*(-1,2){.07}{a11}
\cnode*(0,2){.07}{a12}
\cnode*(1,2){.07}{a13}
\cnode*(-1.5,2.5){.07}{a14}
\cnode*(-.5,2.5){.07}{a15}
\cnode*(.5,2.5){.07}{a16}
\cnode*(-1,3){.07}{a17}
\cnode*(0,3){.07}{a18}
\cnode*(-.5,3.5){.07}{a19}
\psset{linecolor=black}
\ncline{->}{a0}{a1}
\ncline{->}{a0}{a2}
\ncline{->}{a1}{a3}
\ncline{->}{a1}{a4}
\ncline{->}{a2}{a4}
\ncline{->}{a2}{a5}
\ncline{->}{a3}{a6}
\ncline{->}{a3}{a7}
\ncline{->}{a4}{a7}
\ncline{->}{a4}{a8}
\ncline{->}{a5}{a8}
\ncline{->}{a5}{a9}
\ncline{->}{a6}{a10}
\ncline{->}{a6}{a11}
\ncline{->}{a7}{a11}
\ncline{->}{a7}{a12}
\ncline{->}{a8}{a12}
\ncline{->}{a8}{a13}
\ncline{->}{a9}{a13}
\ncline{->}{a10}{a14}
\ncline{->}{a11}{a14}
\ncline{->}{a11}{a15}
\ncline{->}{a12}{a15}
\ncline{->}{a12}{a16}
\ncline{->}{a13}{a16}
\ncline{->}{a14}{a17}
\ncline{->}{a15}{a17}
\ncline{->}{a15}{a18}
\ncline{->}{a16}{a18}
\ncline{->}{a17}{a19}
\ncline{->}{a18}{a19}
\uput[r](-.2,-.3){\scriptsize{$\alpha_p=\varepsilon_p-\varepsilon_{p+1}$}}
\uput[r](.3,.2){\scriptsize{$p+1$}}
\uput[r](.6,.7){\scriptsize{$p+2$}}
\uput[r](1.4,1.3){{\scriptsize{$\varepsilon_p-\varepsilon_n$}}}
\uput[r](-.7,3.8){{\scriptsize{$\beta=\varepsilon_1-\varepsilon_n=\alpha_{1}+\alpha_{2}+\dots+\alpha_{n-1}$}}}
\end{pspicture}
\end{center}

	For  $0\leq k\leq r-1=\min\{p,q\}-1$,
	$$
	A_k=\Delta(\p^+)_{k+1}=\{\varepsilon_{p-k+i} -\varepsilon_{p+i+1} = [\, \underbrace{0,\ldots,0}_{p-k+i-1},\underbrace{1,\ldots,1}_{k+1} ,\underbrace{0,\ldots,0}_{q-i-1}\, ] \mid 0\leq i \leq k\}.
	$$

$\mathfrak{g}_{\R}=\mathfrak{so}^*(2n)$:

\hspace{-4pc}\ 	\begin{pspicture}(1,-1)(10,5)
	\psset{linewidth=.5pt,labelsep=8pt,nodesep=0pt}

	\cnode*(8,0){.07}{a0}\uput[r](8,0){$\alpha_{n}=\varepsilon_{n-1}+\varepsilon_{n}$}
	\cnode*(7.5,.5){.07}{a1}
	\cnode*(7,1){.07}{a2}
	\cnode*(8,1){.07}{c2}
	\cnode*(6.5,1.5){.07}{a3}
	\cnode*(7.5,1.5){.07}{a4}
	\cnode*(6,2){.07}{a5}
	\cnode*(7,2){.07}{a6}
	\cnode*(8,2){.07}{c6}
	\cnode*(6.5,2.5){.07}{a7}
	\cnode*(7.5,2.5){.07}{a8}
	\cnode*(7,3){.07}{a9}
	\cnode*(8,3){.07}{a10}
	\cnode*(7.5,3.5){.07}{a12}
	\cnode*(8,4){.07}{c13}\uput[r](8,4){$\beta=\varepsilon_1+\varepsilon_2=\alpha_1+2\alpha_2+\dots+2\alpha_{n-2}+\alpha_{n-1}+\alpha_n$}
	\ncline{->}{a0}{a1}
	\ncline{->}{a1}{a2}
	\ncline{->}{a1}{c2}
	\ncline{->}{a2}{a3}
	\ncline{->}{a2}{a4}
	\ncline{->}{c2}{a4}
	\ncline{->}{a3}{a5}
	\ncline{->}{a3}{a6}
	\ncline{->}{a4}{a6}
	\ncline{->}{a4}{c6}
	\ncline{->}{a5}{a7}
	\ncline{->}{a6}{a7}
	\ncline{->}{a6}{a8}
	\ncline{->}{a7}{a9}
	\ncline{->}{c6}{a8}
	\ncline{->}{a8}{a9}
	\ncline{->}{a8}{a10}
	\ncline{->}{a9}{a12}
	\ncline{->}{a10}{a12}
	\ncline{->}{a12}{c13}
	\uput[d](7.2,.5){\scriptsize{$n-2$}}
	\uput[d](6.9,1){\scriptsize{$n-3$}}
	\uput[d](6.6,1.7){\scriptsize{$\ddots$}}
	\uput[d](6.1,2){\scriptsize{1}}
	\uput[u](6,2.1){\scriptsize{$n-1$}}
	\uput[u](6.5,2.6){\scriptsize{$n-2$}}
	\uput[u](7.1,3.1){\scriptsize{$\cdot$}}
	\uput[u](7.2,3.2){\scriptsize{$\cdot$}}
	\uput[u](7.0,3.0){\scriptsize{$\cdot$}}
	\uput[u](7.6,3.6){\scriptsize{2}}
	\end{pspicture}

 For  $1\leq k\leq r-1=[\frac{n}{2}]-1$,
	\begin{align*}
	A_k&=\Delta(\mathfrak{p}^+)_{2k+1}=\{[\, \underbrace{0,\ldots,0}_{n-2k-2},\underbrace{1,\ldots,1}_{2k} ,0,1\, ],\\
	&~~~~~~~~~\quad\quad\quad\quad\quad\quad\quad[\, \underbrace{0,\ldots,0}_{n-2k-1+i},\underbrace{1,\ldots,1}_{2k-2i-1},\underbrace{2,\ldots,2}_{i} , 1,1\, ] \mid 0\leq i \leq k-1\},\\
	A_0&= \Delta(\mathfrak{p}^+)_{1}=\{[0,0,\dots,0,1]\}.
	\end{align*}

 \vspace{2cm}

\newpage
$\mathfrak{g}_{\R}=\mathfrak{so}(2,2n-2)$:

\hspace{11pc}\ \begin{pspicture}(1,-1)(10,5)
\psset{linewidth=.5pt,labelsep=8pt,nodesep=0pt}
\cnode*(2,0){.07}{a0}
\cnode*(1.5,.5){.07}{a1}
\cnode*(1,1){.07}{a2}
\cnode*(.5,1.5){.07}{a3}
\cnode*(0,2){.07}{a5}
\cnode*(1,2){.07}{a6}
\uput[r](0,2){$\beta_{1}$ }
\uput[r](1,2){$\beta_{2}$ }
\cnode*(.5,2.5){.07}{a7}
\cnode*(1,3){.07}{a9}
\cnode*(1.5,3.5){.07}{a12}
\cnode*(2,4){.07}{b13}
\ncline{->}{a0}{a1}
\ncline{->}{a1}{a2}
\ncline{->}{a2}{a3}
\ncline{->}{a3}{a5}
\ncline{->}{a3}{a6}
\ncline{->}{a5}{a7}
\ncline{->}{a6}{a7}
\ncline{->}{a7}{a9}
\ncline{->}{a9}{a12}
\ncline{->}{a12}{b13}
\uput[d](1.6,.4){\scriptsize{2}}
\uput[d](1.1,.9){\scriptsize{3}}
\uput[d](0.6,1.7){\scriptsize{$\ddots$}}
\uput[d](0.1,1.9){\scriptsize{$n-1$}}
\uput[u](0.1,2.1){\scriptsize{$n$}}
\uput[u](0.5,2.5){\scriptsize{$\cdot$}}
\uput[u](0.6,2.6){\scriptsize{$\cdot$}}
\uput[u](0.7,2.7){\scriptsize{$\cdot$}}
\uput[u](1.1,3.1){\scriptsize{3}}
\uput[u](1.6,3.6){\scriptsize{2}}
\uput[r](1.8,-.3){$\alpha_{1}=\varepsilon_1-\varepsilon_2$}
\uput[r](1.8,4.3){$\beta =\varepsilon_1+\varepsilon_2= \alpha_{1}+2\alpha_{2}+2\alpha_{3}+2\alpha_{4}+\alpha_{5}+\alpha_{6}$}
\end{pspicture}\\[.5 pc]
	\begin{align*}
	A_0&=\Delta(\mathfrak{p}^+)_{1}=\{\varepsilon_{1}-\varepsilon_{2}=[1,0,...,0]\},\\
	A_1&= \Delta(\mathfrak{p}^+)_{n-1}=\{\varepsilon_{1}-\varepsilon_{n}= [1,\dots,1,0], \varepsilon_{1}+\varepsilon_{n}= [1,\dots,1,0,1]\}.
	\end{align*}

$\mathfrak{g}_{\R}=\mathfrak{e}_{6(-14)}$:

\begin{center}
	\begin{pspicture}(0,0)(10,5.5)
	\psset{linewidth=.5pt,labelsep=8pt,nodesep=0pt}
	\cnode*(2,0){.07}{a0}
	\cnode*(1.5,.5){.07}{a1}
	\cnode*(1,1){.07}{a2}
	\cnode*(.5,1.5){.07}{a3}
	\cnode*(1.5,1.5){.07}{a4}
	\cnode*(0,2){.07}{a5}
	\cnode*(1,2){.07}{a6}
	\cnode*(.5,2.5){.07}{a7}
	\cnode*(1.5,2.5){.07}{a8}
	\cnode*(1,3){.07}{a9}
	\cnode*(2,3){.07}{a10}
	\cnode*(.5,3.5){.07}{a11}
	\cnode*(1.5,3.5){.07}{a12}
	\cnode*(1,4){.07}{a13}
	\cnode*(.5,4.5){.07}{a14}
	\cnode*(0,5){.07}{a15}
	\ncline{->}{a0}{a1}
	\ncline{->}{a1}{a2}
	\ncline{->}{a2}{a3}
	\ncline{->}{a2}{a4}
	\ncline{->}{a3}{a5}
	\ncline{->}{a3}{a6}
	\ncline{->}{a4}{a6}
	\ncline{->}{a5}{a7}
	\ncline{->}{a6}{a7}
	\ncline{->}{a6}{a8}
	\ncline{->}{a7}{a9}
	\ncline{->}{a8}{a9}
	\ncline{->}{a8}{a10}
	\ncline{->}{a9}{a11}
	\ncline{->}{a9}{a12}
	\ncline{->}{a10}{a12}
	\ncline{->}{a11}{a13}
	\ncline{->}{a12}{a13}
	\ncline{->}{a13}{a14}
	\ncline{->}{a14}{a15}
	\uput[r](2,0){$\alpha_{1}$ }
	\uput[r](-.25,5.35){$\beta=  \alpha_{1}+2\alpha_{2}+2\alpha_{3}+3\alpha_{4}+2\alpha_{5}+\alpha_{6}$}
	\uput[r](0.4,1.5){$\beta_{1}$ }
	\uput[r](1.3,1.5){$\beta_{2}$ }
	\uput[d](1.6,.5){\scriptsize{3}}
	\uput[d](1.1,1){\scriptsize{4}}
	\uput[d](0.6,1.5){\scriptsize{5}}
	\uput[d](0.1,2){\scriptsize{6}}
	\uput[u](0.1,2){\scriptsize{2}}
	\uput[u](0.6,2.5){\scriptsize{4}}
	\uput[u](1.1,3){\scriptsize{3}}
	\uput[u](1.4,3.5){\scriptsize{5}}
	\uput[u](1.4,3.5){\scriptsize{5}}
	\uput[u](0.9,4.0){\scriptsize{4}}
	\uput[d](0.1,5.0){\scriptsize{2}}
	\end{pspicture}
\end{center}

\begin{align*}
	A_0&= \Delta(\mathfrak{p}^+)_{1}=\{[1,0,0,0,0,0]\},\\ 
	A_1&= \Delta(\mathfrak{p}^+)_{4}=\{ [1, 0, 1, 1, 1, 0],[1, 1, 1, 1, 0, 0]\}.
	\end{align*}

\newpage
$\mathfrak{g}_{\R}=\mathfrak{e}_{7(-25)}$:

\begin{center}
	\begin{pspicture}(0,-1)(10, 8.5)
	\cnode*(2.5,-.5){.07}{b0}
	\cnode*(2,0){.07}{a0}
	\cnode*(1.5,.5){.07}{a1}
	\cnode*(1,1){.07}{a2}
	\cnode*(.5,1.5){.07}{a3}
	\cnode*(1.5,1.5){.07}{a4}
	\cnode*(0,2){.07}{a5}
	\cnode*(1,2){.07}{a6}
	\cnode*(.5,2.5){.07}{a7}
	\cnode*(1.5,2.5){.07}{a8}
	\cnode*(1,3){.07}{a9}
	\cnode*(2,3){.07}{a10}
	\cnode*(.5,3.5){.07}{a11}
	\cnode*(1.5,3.5){.07}{a12}
	\cnode*(1,4){.07}{a13}
	\cnode*(.5,4.5){.07}{a14}
	\cnode*(0,5){.07}{a15}
	\cnode*(2.5,3.5){.07}{b10}
	\cnode*(2,4){.07}{b12}
	\cnode*(1.5,4.5){.07}{b13}
	\cnode*(1,5){.07}{b14}
	\cnode*(.5,5.5){.07}{b15}
	\cnode*(1.5,5.5){.07}{c14}
	\cnode*(1,6){.07}{c15}
	\cnode*(1.5,6.5){.07}{c16}
	\cnode*(2,7){.07}{c17}
	\cnode*(2.5,7.5){.07}{c18}
	\ncline{a0}{b0}
	\ncline{a0}{a1}
	\ncline{a1}{a2}
	\ncline{a2}{a3}
	\ncline{a2}{a4}
	\ncline{a3}{a5}
	\ncline{a3}{a6}
	\ncline{a4}{a6}
	\ncline{a5}{a7}
	\ncline{a6}{a7}
	\ncline{a6}{a8}
	\ncline{a7}{a9}
	\ncline{a8}{a9}
	\ncline{a8}{a10}
	\ncline{a9}{a11}
	\ncline{a9}{a12}
	\ncline{a10}{a12}
	\ncline{a11}{a13}
	\ncline{a12}{a13}
	\ncline{a13}{a14}	
	\ncline{a14}{a15}
	\ncline{b10}{b15}
	\ncline{a10}{b10}
	\ncline{a12}{b12}
	\ncline{a13}{b13}
	\ncline{a14}{b14}
	\ncline{a15}{b15}
	\ncline{c14}{b14}
	\ncline{c15}{b15}
	\ncline{c14}{c15}
	\ncline{c15}{c18}
	\uput[d](2.1,-.1){\scriptsize{6}}
	\uput[d](1.6,.4){\scriptsize{5}}
	\uput[d](1.1,.9){\scriptsize{4}}
	\uput[d](0.6,1.4){\scriptsize{3}}
	\uput[d](0.1,1.9){\scriptsize{1}}
	\uput[u](0.1,2.1){\scriptsize{2}}
	\uput[u](0.6,2.6){\scriptsize{4}}
	\uput[u](0.6,3.6){\scriptsize{5}}
	\uput[u](1.9,4.1){\scriptsize{3}}
	\uput[u](1.4,4.6){\scriptsize{4}}
	\uput[u](1.4,5.6){\scriptsize{2}}
	\uput[u](0.1,5.1){\scriptsize{6}}
	\uput[u](0.6,5.6){\scriptsize{5}}
	\uput[u](1.1,6.1){\scriptsize{4}}
	\uput[u](1.6,6.6){\scriptsize{3}}
	\uput[u](2.1,7.1){\scriptsize{1}}
	\uput[d](2.5,-.5){$\alpha_{7}$}
	\uput[d](0.8,1.7){$\beta_{1}$}
	\uput[d](1.8,1.7){$\beta_{2}$}
	\uput[d](0.8,3.8){$\gamma_{1}$}
	\uput[d](1.8,3.8){$\gamma_{2}$}
	\uput[d](2.8,3.8){$\gamma_{3}$}
	\uput[r](2.3,7.8){$\beta= 2\alpha_{1}+2\alpha_{2}+3\alpha_{3}+4\alpha_{4}+3\alpha_{5}+2\alpha_{6}+\alpha_7$}
	\end{pspicture}
\end{center}


\begin{align*}
	A_0&= \Delta(\mathfrak{p}^+)_{1}=\{[0,0,0,0,0,0,1]\}, \\
	A_1&= \Delta(\mathfrak{p}^+)_{5}=\{[0, 0, 1, 1, 1, 1, 1], [0, 1, 0, 1, 1, 1, 1]\},\\
	A_2&=\Delta(\mathfrak{p}^+)_{9}=\{[1, 1, 2, 2, 1, 1, 1],[1, 1, 1, 2, 2, 1, 1],[0, 1, 1, 2, 2, 2, 1]\}.
	\end{align*}

$\mathfrak{g}_{\R}=\mathfrak{so}(2,2n-1)$:
\vspace{1cm}

 \begin{pspicture}(1,-1)(-2,0)
\psset{linewidth=.5pt,labelsep=8pt,nodesep=0pt}
\cnode*(0,0){.07}{d1}
\uput[d](0,0.1){\tiny{ $\alpha_1$}}
\cnode*(1,0){.07}{d2}
\uput[d](0.5,0.1){\tiny{ $2$}}
\cnode*(2,0){.07}{d3}
\uput[d](2.4,0.1){\tiny{ $n-1$}}
\cnode*(3,0){.07}{d4}
\uput[d](3.4,0.1){\tiny{ $n$}}
\uput[d](4.4,0.1){\tiny{ $n$}}
\uput[d](5.4,0.1){\tiny{ $n-1$}}
\cnode*(4,0){.07}{d5}
\cnode*(5,0){.07}{d6}
\cnode*(6,0){.07}{d7}
\uput[d](7.4,0.1){\tiny{ $3$}}
\cnode*(7,0){.07}{d8}
\cnode*(8,0){.07}{d9}
\cnode*(9,0){.07}{d10}
\uput[d](8.4,0.1){\tiny{ $2$}}
\uput[d](9.6,0.1){\tiny{ $\beta=\varepsilon_1+\varepsilon_2$}}
\ncline{->}{d1}{d2}
\ncline[linestyle=dotted,dotsep=1.3pt]{d2}{d3}
\ncline{->}{d3}{d4}
\ncline{->}{d4}{d5}
\ncline{->}{d5}{d6}
\ncline{->}{d6}{d7}
\ncline[linestyle=dotted,dotsep=1.3pt]{d7}{d8}
\ncline{->}{d8}{d9}
\ncline{->}{d9}{d10}
\end{pspicture}\\[.5 pc]
\begin{align*}
	A_0&=\Delta(\mathfrak{p}^+)_{1}=\{\varepsilon_{1}-\varepsilon_{2}=[1,0,...,0]\},\\
	A_1&= \Delta(\mathfrak{p}^+)_{n}=\{\varepsilon_{1}= [1,1,\dots,1]\}.
	\end{align*}

\newpage
$\mathfrak{g}_{\R}=\mathfrak{sp}(n,\mathbb{R})$:
\begin{center}
\begin{pspicture}(1,-1)(10,5)
\psset{linewidth=.5pt,labelsep=8pt,nodesep=0pt}
\cnode*(3,0){.07}{a0}\uput[r](3,0){$\alpha_{n}=2\varepsilon_n$}
\cnode*(2.5,.5){.07}{a1}
\cnode*(2,1){.07}{a2}
\cnode*(3,1){.07}{c2}
\cnode*(1.5,1.5){.07}{a3}
\cnode*(2.5,1.5){.07}{a4}
\cnode*(1,2){.07}{a5}
\cnode*(2,2){.07}{a6}
\cnode*(3,2){.07}{c6}
\cnode*(1.5,2.5){.07}{a7}
\cnode*(2.5,2.5){.07}{a8}
\cnode*(2,3){.07}{a9}
\cnode*(3,3){.07}{a10}
\cnode*(2.5,3.5){.07}{a12}
\cnode*(3,4){.07}{c13}\uput[r](3,4){$\beta=2\varepsilon_1$}
\ncline{->}{a0}{a1}
\ncline{->}{a1}{a2}
\ncline{->}{a1}{c2}
\ncline{->}{a2}{a3}
\ncline{->}{a2}{a4}
\ncline{->}{c2}{a4}
\ncline{->}{a3}{a5}
\ncline{->}{a3}{a6}
\ncline{->}{a4}{a6}
\ncline{->}{a4}{c6}
\ncline{->}{a5}{a7}
\ncline{->}{a6}{a7}
\ncline{->}{a6}{a8}
\ncline{->}{a7}{a9}
\ncline{->}{c6}{a8}
\ncline{->}{a8}{a9}
\ncline{->}{a8}{a10}
\ncline{->}{a9}{a12}
\ncline{->}{a10}{a12}
\ncline{->}{a12}{c13}
\uput[d](2.6,.5){\scriptsize{n-1}}
\uput[d](2.1,1){\scriptsize{n-2}}
\uput[d](1.6,1.7){\scriptsize{$\ddots$}}
\uput[d](1.1,2){\scriptsize{1}}
\uput[u](1,2.1){\scriptsize{n-1}}
\uput[u](1.5,2.6){\scriptsize{n-2}}
\uput[u](2.1,3.1){\scriptsize{$\cdot$}}
\uput[u](2.2,3.2){\scriptsize{$\cdot$}}
\uput[u](2.0,3.0){\scriptsize{$\cdot$}}
\uput[u](2.6,3.6){\scriptsize{1}}

\end{pspicture}
\end{center}

For  $0\leq k\leq r-1=n-1$, 
	\begin{align*}
\text{if $k=2m$ is even}, A_k&=\Delta(\mathfrak{p}^+)_{k+1}=\{2\varepsilon_{n-m},\varepsilon_{n-m-i}+\varepsilon_{n-m+i}\mid 1\leq i\leq m\}\\
&=\{[\, \underbrace{0,\ldots,0}_{n-m-1-i},\underbrace{1,\ldots,1}_{2i},\underbrace{2,\ldots,2}_{m-i},1\, ]\mid 0\leq i \leq m\},
\end{align*}
\begin{align*}
\text{if $k=2m+1$ is odd}, A_k&=\Delta(\mathfrak{p}^+)_{k+1}=\{\varepsilon_{n-m-1-i}+\varepsilon_{n-m+i}\mid 0\leq i\leq m\}\\
&=\{[\, \underbrace{0,\ldots,0}_{n-m-2-i},\underbrace{1,\ldots,1}_{1+2i},\underbrace{2,\ldots,2}_{m-i},1\, ]\mid 0\leq i \leq m\}.
\end{align*}

\bibliography{GK-BH}
\end{document}